\newtheorem{theorem}{Theorem}
\newtheorem{proposition}[theorem]{Proposition}
\newtheorem{lemma}[theorem]{Lemma}
\newtheorem{corollary}[theorem]{Corollary}
\theoremstyle{definition}
\theoremstyle{remark}
\newtheorem{remark}[theorem]{Remark}
\definecolor{verde}{RGB}{20,150,100}
\newcommand{\vect}{\overrightarrow}
\newcommand{\nif}{{n \rightarrow +\infty}}
\newcommand{\vv}{{\bf v}}
\newcommand{\ee}{{\bf e}}
\newcommand{\G}{\Gamma}
\newcommand{\Om}{\Omega}
\newcommand{\lb}{\lambda}
\newcommand{\sm}{\setminus}
\newcommand{\sq}{\subseteq}
\newcommand{\ov}{\overline}
\newcommand{\vphi}{\varphi}
\newcommand{\vps}{\varepsilon}
\newcommand{\ra}{\rightarrow}
\newcommand{\R}{\mathbb R}
\newcommand\reallywidecheck[1]{%
\savestack{\tmpbox}{\stretchto{%
  \scaleto{%
    \scalerel*[\widthof{\ensuremath{#1}}]{\kern-.6pt\bigwedge\kern-.6pt}%
    {\rule[-\textheight/2]{1ex}{\textheight}}
  }{\textheight}%
}{0.5ex}}%
\stackon[1pt]{#1}{\scalebox{-1}{\tmpbox}}%
}
\def\R{\mathbb{R}}
\def\1{{{\bf 1}}}
\begin{document}

\title[]{Maximization of the second non-trivial Neumann eigenvalue}

\author[]{Dorin Bucur, Antoine Henrot}
\thanks{{Dorin Bucur is member of the Institut Universitaire de France. His work is part of the  "Geometry and Spectral Optimization" GeoSpec, Persyval Lab research programme.}}

\address[Dorin Bucur]{
Univ. Grenoble Alpes, Univ. Savoie Mont Blanc, CNRS, LAMA \\
73000 Chambéry, France
}
\email{dorin.bucur@univ-savoie.fr}
\address[Antoine Henrot]{
Institut Elie Cartan de Lorraine \\ CNRS UMR 7502 and Universit\'e de Lorraine \\
BP 70239
54506 Vandoeuvre-l\`es-Nancy, France}
\email{antoine.henrot@univ-lorraine.fr}

\keywords{}
\subjclass[2010]{35P15, 49Q10}


\begin{abstract}
In this paper we prove that the second (non-trivial) Neumann eigenvalue of the Laplace operator on smooth domains of $\R^N$ with prescribed measure $m$ attains its maximum on the union of two disjoint balls of measure $\frac m2$. As a consequence, the P\'olya conjecture for the Neumann eigenvalues holds for the second eigenvalue and for arbitrary domains. 
We moreover prove that a relaxed form of the same inequality holds in the context of non-smooth domains and densities.  
  \end{abstract}
\maketitle

\section{Introduction and Statement of the results}
Let $N \ge 2$  and $\Om\sq \R^N$ be a bounded open set such that the Sobolev space $H^1(\Om)$ is compactly embedded in $L^2(\Om)$ (for instance $\Omega$ Lipschitz). Those sets are called {\it regular} throughout the paper. On such domains, the spectrum of the Laplace operator with Neumann boundary conditions consists only on eigenvalues that we denote (counting their multiplicities) 
$$0 = \mu_0(\Om)\le \mu _1(\Om) \le \mu_2(\Om) \le \dots \ra +\infty.$$
For every $k \ge 1$, we have
$$\mu_k(\Om) = \min_{S\in{\mathcal S}_k} \max_{u \in S} \frac{\int_\Om |\nabla u|^2 dx}{\int_\Om u^2 dx},$$
where ${\mathcal S}_k$ is the family of all subspaces of dimension $k$ in 
$$H^1(\Om)_{/\R}:=\{u \in H^1(\Om) :\int_\Om u dx =0\}.$$
If $\Om$ is connected, then $\mu_1(\Om) >0$. 

 In 1954 Szeg\"{o} proved  that among simply connected, two dimensional, smooth sets $\Om \sq \R^2$  the ball maximizes $\mu_1$ (see \cite{Sz54} and \cite{As99,BL08})), i.e.\footnote{  Weinberger noted  in \cite{We56} that the proof of Szeg\"{o} gives a stronger result, namely that the disc minimizes the sum $\frac{1}{\mu_1(\Om)}+\frac{1}{\mu_2(\Om)}$ among two dimensional, smooth, simply connected sets of given area.}
$$|\Om|\mu_1(\Om)\le |B|\mu_1(B).$$
Two years later, Weinberger \cite{We56} removed the topological constraint and the dimension restriction and he proved that for every $N\ge 2$ and $\Om \sq \R^N$ regular, the following inequality holds
$$|\Om|^\frac 2N\mu_1(\Om)\le \mu_1^*,$$
where $  \mu_1^*=|B|^\frac 2N\mu_1(B)$. 

Maximizing the Neumann eigenvalues under volume constraint is also related to the celebrated conjecture of P\'olya \cite{Po54b} asserting that the principal term of the Weyl law provides in fact a bound for the eigenvalues. This conjecture reads, in $N$ dimensions,
$$\forall k \ge 1, \;\;\, \mu_k(\Om) \le 4\pi^2 \Big (\frac{k}{\omega_N |\Om|)}\Big )^\frac 2N,$$
where $\omega_N$ is the volume of the unit ball in $\R^N$. The conjecture was proved to hold only for particular classes of domains, for instance tiling domains in $\R^2$ (see \cite{La97}). For general regular domains, the conjecture holds true in the case $k=1$, as a consequence of the Szeg\"{o}-Weinberger inequality, but continues to remain open for arbitrary $k$. Kr\" oger found in  \cite{Kr92} a series of  bounds, which are larger than the conjectured ones. For instance, if $k=2$ he proved  $\mu_2(\Om) \le \frac{16\pi}{|\Om|}$ for two dimensional domains. The value  $ \frac{16\pi}{|\Om|}$ is the double of the conjectured one.  A natural, related,  question is to find the geometry of the domain which maximizes the $k$-th Neumann eigenvalue. This question turns out to be difficult for $k=2$ and probably impossible to answer analytically for $k \ge 3$. We refer to \cite{AO17,AF12, Be15} for numerical approximations of the  (presumably) optimal sets for $k \le 10$, but there is no  proof of the existence of those sets. 

We refer the reader to the result of Girouard, Nadirashvili and Polterovich \cite{GNP09} where the authors prove that in $\R^2$ the union of two equal (and disjoint) disks  gives a larger second eigenvalue than any smooth simply connected open set of the same measure. Moreover, this value is asymptotically attained by two disks with vanishing intersection. Their proof is based on a combination of topological and analytical arguments and relies on a {\it folding and rearrangement} technique introduced by Nadirashvili in \cite{Na02}, taking advantage on the use of conformal mappings. This method can not be adapted to non simply connected sets.  The authors left the case of arbitrary (regular) domains of $\R^2$ as an open problem. 

Several independent numerical computations \cite{AO17,AF12, Be15} in $\R^2, \R^3$ brought support in favor of the maximality of the union of the two discs without the simply connectedness constraint in $\R^2$ and, moreover, lead to a similar conjecture in  three dimensions of the space.

The purpose of this paper is to prove that, in general, the second Neumann eigenvalue attains its maximum  on a union of two disjoint, equal balls in the class of arbitrary domains of prescribed measure of $\R^N$.    As a consequence, we prove that the P\'olya conjecture for Neumann eigenvalues holds for $k=2$, without any restriction on the dimension, geometry or topology of the domains. 

\medskip

Let us denote the scale invariant quantity $\mu_2^*=2^\frac 2N |B|^\frac 2N\mu_1(B)$, where $B$ is any ball.   On the union of two disjoint balls $B_1, B_2$, each of mass $\frac 12$, we have $\mu_0(B_1 \cup B_2)=0, \mu_1(B_1\cup B_2)=0, \mu_2(B_1\cup B_2) = \mu_2^*$. The main result of the paper is the following.
\begin{theorem}\label{bh01}
Let $\Om\sq \R^N$ be a regular set. Then
$$|\Om|^\frac 2N\mu_2(\Om)\le \mu_2^*.$$
 If equality occurs, then $\Om$ coincides a.e. with the union of two disjoint, equal balls.
\end{theorem}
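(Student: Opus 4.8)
The plan is to reduce the problem to a quantitative Szeg\H{o}--Weinberger estimate on a well-chosen pair of test functions, obtained after a careful subdivision of $\Omega$. Since $\mu_2$ is a min-max over $2$-dimensional subspaces of $H^1(\Omega)_{/\R}$, it suffices to exhibit a pair of linearly independent functions $u_1,u_2 \in H^1(\Omega)$, each with zero mean, whose Rayleigh quotients are controlled by $\mu_2^*/|\Omega|^{2/N}$ on every normalized linear combination. The guiding heuristic is that the extremal configuration is two balls, so one expects to partition $\Omega$ into two pieces $\Omega_1,\Omega_2$ (of measures $m_1,m_2$ with $m_1+m_2=|\Omega|$), put a Weinberger-type vector field on each, and glue. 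On the ball of mass $m$, the optimal Weinberger test functions $g(|x|)\tfrac{x_i}{|x|}$ give exactly the eigenvalue $|B_m|^{2/N}\mu_1(B_m)=\mu_1^*$, i.e.\ $\mu_1(B_m)=\mu_1^*/m^{2/N}$ after rescaling. Taking $u_1$ supported (morally) on $\Omega_1$ and $u_2$ on $\Omega_2$, with disjoint supports, any combination $a u_1 + b u_2$ has Rayleigh quotient $\le \max\big(\mu_1^*/m_1^{2/N},\,\mu_1^*/m_2^{2/N}\big)$, which is minimized when $m_1=m_2=|\Omega|/2$, yielding precisely $\mu_2^* = 2^{2/N}\mu_1^*/|\Omega|^{2/N}$ in the form $|\Omega|^{2/N}\mu_2(\Omega)\le\mu_2^*$.

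\textbf{Key steps.} First I would set up the combinatorial/geometric splitting: there exists a hyperplane (or a suitable partition) cutting $\Omega$ into two parts of equal measure, and I would need both parts to support good test functions simultaneously. The subtlety is that one cannot freely prescribe the linear moment-centering conditions that Weinberger's argument requires on two pieces at once; the classical fixed-point argument (using Brouwer/degree theory to center the test vector field so that $\int_\Omega u_i\,dx = 0$) must be run in a way compatible with the two-domain structure. Second, on each piece I would transplant the radial profile $g$ from the ball of the same mass, using that $g$ is increasing and that the ODE comparison (Weinberger's lemma on $g'^2 + (N-1)g^2/r^2$ versus $\mu_1^* g^2$, bounded by monotonicity in $r$) survives when $\Omega_i$ is not a ball, because only the distribution function of the weight enters. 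Third, I would estimate the numerator $\int_{\Omega_i}|\nabla u_i|^2$ by bringing it back, via the layer-cake / bathtub rearrangement, to the corresponding integral on the ball $B_{m_i}$, where it equals $\mu_1(B_{m_i})\int_{B_{m_i}} g^2$; the denominator is handled the same way with the reverse inequality, so the ratio is $\le \mu_1^*/m_i^{2/N}$. Finally I would optimize over $m_1,m_2$ and invoke min-max to conclude the inequality, then run the equality analysis: equality forces equality in every rearrangement inequality and in the ODE comparison, which pins down each $\Omega_i$ as a ball and forces $m_1=m_2$.

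\textbf{Main obstacle.} The delicate point is the simultaneous centering: making both test functions have zero average over $\Omega$ while keeping their supports disjoint and the profiles tied to the correct masses. A single hyperplane cut gives $m_1=m_2$ automatically but then the centering conditions $\int u_i = 0$ are \emph{not} free — they are forced by the cut only in special symmetric cases — so one likely needs a more flexible decomposition (e.g.\ a continuous family of partitions parametrized by a direction in $S^{N-1}$ together with a translation parameter) and a topological argument showing some member of the family satisfies all the mean-zero constraints at once. Equivalently, one may have to allow $m_1 \ne m_2$ in the construction, obtain the bound $\max_i \mu_1^*/m_i^{2/N}$, and then separately argue that the worst case one must beat is $m_1=m_2$. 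Handling the non-smooth boundary (so that the rearrangement and the trace/integration-by-parts steps are legitimate for merely \emph{regular} $\Omega$) and controlling the behavior of $g$ near the origin are the other technical hurdles, but the centering/partition interplay is where the real work lies.
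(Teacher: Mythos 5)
Your outline runs into a genuine obstruction at the very first structural step: you cannot build the 2-dimensional test space out of two functions with \emph{disjoint supports}. If $\Om$ is connected and you cut it by a hyperplane into $\Om_1,\Om_2$, a Weinberger field centered in $\Om_1$ does not vanish on the interface $\partial\Om_1\cap\Om$ (the profile $g$ vanishes only at the center), so restricting it to $\Om_1$ and extending by zero produces a jump across an interior hypersurface and the result is \emph{not} in $H^1(\Om)$; the min-max for $\mu_2(\Om)$ only accepts subspaces of $H^1(\Om)_{/\R}$. This is exactly the reason the two-nodal-domain argument that settles the Dirichlet $\lambda_2$ problem does not transfer to Neumann, and it is the difficulty your ``morally supported on $\Om_i$'' glosses over. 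Trying to force the test functions to vanish on the cut (e.g.\ keeping only the component normal to a hyperplane through the center) destroys Weinberger's summation over all $N$ components, which is what makes the numerator a decreasing radial function and the rearrangement/mass-displacement bound work; with a single component the angular factors do not cancel and the quotient is no longer controlled by $\mu_1(B_{m_i})$. Incidentally, the obstacle you single out (simultaneous centering) is not the crux: on each piece separately, Brouwer's fixed-point argument centers the field so that all $N$ components have zero mean on that piece; it is admissibility of the glued-by-zero functions that fails.

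The paper's proof resolves this differently, and the key ideas are absent from your proposal. Instead of two disjointly supported functions, it builds a \emph{single} continuous vector field ${\bf g}^{AB}$, equal to the Weinberger field ${\bf g}_A$ (profile of the ball of measure $|\Om|/2$) on the half-space $H_A$ and to the reflected field $T_{AB}({\bf g}_B)$ on $H_B$, glued continuously across the mediator hyperplane of $AB$, hence in $W^{1,\infty}(\R^N)$ and admissible. The second basis vector of the test space is the actual first Neumann eigenfunction $u_1$ of $\Om$: since $u_1$ is an eigenfunction, $L^2$-orthogonality $\int_\Om {\bf g}^{AB}\cdot\ee_i\,u_1\,dx=0$ automatically kills the gradient cross terms, so $\mu_2(\Om)$ is bounded by the Rayleigh quotient of ${\bf g}^{AB}\cdot\ee_i$ alone, and the mass-displacement step (on each half-space separately, toward $B_{A,r_\Om}$ and $B_{B,r_\Om}$) gives $\mu_1(B_{r_\Om})=\mu_2^*/|\Om|^{2/N}$, with equality forcing $\Om$ to be two equal balls. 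The real work, which your plan has no substitute for, is proving that two points $A\neq B$ exist making the $N$ functions ${\bf g}^{AB}\cdot\ee_i$ simultaneously orthogonal to constants \emph{and} to the unknown $u_1$: this is a $2N$-dimensional topological degree argument (two homotopies deforming the constraint map to an explicit one whose degree is computed to be $2$), not a single Brouwer centering per piece. So while your heuristic target value and the equality discussion are the right ones, the construction as proposed cannot be carried out, and repairing it essentially forces you into the paper's glued-field-plus-eigenfunction strategy.
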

As a consequence, we get the following.
\begin{corollary}\label{bh03.c}
The P\'olya conjecture for the Neumann eigenvalues, holds  for $k=2$ in any dimension of the space.
\end{corollary}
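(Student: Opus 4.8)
The plan is to obtain Corollary~\ref{bh03.c} directly from Theorem~\ref{bh01}, by checking that the sharp constant $\mu_2^*$ lies below the value conjectured by P\'olya. Unwinding the statement, P\'olya's inequality for $k=2$ requires, for every regular $\Om\sq\R^N$, that $\mu_2(\Om)\le 4\pi^2\big(\frac{2}{\omega_N|\Om|}\big)^{\frac 2N}$, which after multiplication by $|\Om|^{\frac 2N}$ becomes the scale-invariant statement
$$|\Om|^{\frac 2N}\mu_2(\Om)\le 4\pi^2\Big(\frac{2}{\omega_N}\Big)^{\frac 2N}.$$
By Theorem~\ref{bh01} the left-hand side is at most $\mu_2^*$, so it is enough to prove the purely dimensional inequality $\mu_2^*\le 4\pi^2(2/\omega_N)^{\frac 2N}$.

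Next I would reduce this to the already known case $k=1$. Since $\mu_2^*=2^{\frac 2N}|B|^{\frac 2N}\mu_1(B)=2^{\frac 2N}\mu_1^*$ and $4\pi^2(2/\omega_N)^{\frac 2N}=2^{\frac 2N}\cdot 4\pi^2\,\omega_N^{-\frac 2N}$, the inequality $\mu_2^*\le 4\pi^2(2/\omega_N)^{\frac 2N}$ is equivalent to
$$\mu_1^*\le 4\pi^2\,\omega_N^{-\frac 2N},$$
that is --- recalling that $\mu_1^*$ is the maximal value of $|\Om|^{\frac 2N}\mu_1(\Om)$, by the Szeg\"{o}-Weinberger inequality --- to P\'olya's inequality in the case $k=1$, which holds true. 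Chaining the two estimates gives $|\Om|^{\frac 2N}\mu_2(\Om)\le 2^{\frac 2N}\cdot 4\pi^2\,\omega_N^{-\frac 2N}=4\pi^2(2/\omega_N)^{\frac 2N}$, which is the claim.

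Being a formal consequence, Corollary~\ref{bh03.c} presents no genuine obstacle; the only step deserving a word is the scalar bound $\mu_1^*\le 4\pi^2\omega_N^{-2/N}$. To make the argument self-contained rather than to quote the $k=1$ case, one may estimate $\mu_1^*$ by evaluating $|\Om|^{\frac 2N}\mu_1(\Om)$ on the unit ball $B$ with the admissible test function $u(x)=x_1$, which has zero mean on $B$ by symmetry; this gives $\mu_1(B)\le N+2$, hence $\mu_1^*=\omega_N^{\frac 2N}\mu_1(B)\le(N+2)\,\omega_N^{\frac 2N}$, and it remains to check the elementary inequality $(N+2)\,\omega_N^{\frac 4N}\le 4\pi^2$, i.e.\ $(N+2)\,\Gamma(\frac N2+1)^{-\frac 4N}\le 4$, which is an equality at $N=2$ and becomes increasingly slack as $N\to\infty$ by Stirling's formula.
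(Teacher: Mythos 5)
Your deduction is correct and is essentially the paper's own (implicit) argument: Theorem \ref{bh01} reduces P\'olya's inequality for $k=2$ to the bound $\mu_1^*\le 4\pi^2\omega_N^{-2/N}$, i.e.\ to the $k=1$ case, which the paper already records as a known consequence of the Szeg\"{o}--Weinberger inequality. Your optional self-contained check via the test function $x_1$ is a nice addition; just note that the closing remark ``equality at $N=2$ and increasingly slack as $N\to\infty$'' should be supplemented by a monotonicity argument or a finite verification of $(N+2)\Gamma(\tfrac N2+1)^{-4/N}\le 4$ for the intermediate dimensions (e.g.\ using $\Gamma(x+1)\ge (x/e)^x$ for $N\ge 10$ and direct inspection for $N\le 9$), since Stirling alone only controls the asymptotics.
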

In fact, we shall prove a more general result than Theorem \ref{bh01}. Specifically, we shall prove that the result of Theorem \ref{bh01} holds true on arbitrary open sets (even non regular) and, moreover, on $L^1\cap L^\infty$-densities in $\R^N$, provided the classical eigenvalues, seen as variational quotients, receive a suitable {\it relaxed} definition (see \cite[Chapter 7]{BB05} and \cite{BBG17}). 

Precisely, let $\rho \in L^1(\R^N, [0,1])$. For every $k\ge 1$, we define
$$\tilde \mu _k(\rho) := \inf_{S\in{\mathcal L}_k} \max_{u \in S} \frac{\int_{\R^N} \rho|\nabla u|^2 dx}{\int_{\R^N} \rho u^2 dx},$$
where ${\mathcal L}_k$ is the family of all subspaces of dimension $k$ in 
\begin{equation}\label{bh08}
\{u\cdot 1_{\{\rho (x)>0\}}: u \in C^\infty_c (\R^N), \int_{\R^N} \rho u dx =0\}.
\end{equation}
We have the following.
\begin{theorem}\label{bh03}
Let $\rho \in L^1(\R^N, [0,1])$ be non identically zero. Then
\begin{itemize}
\item ($k=1$, extension of the Szeg\"{o}-Weinberger inequality)
\begin{equation}\label{bh09}
 \left(\int_{\R^N} \rho dx \right)^\frac 2N\tilde \mu_1(\rho)\le \mu_1^*,
 \end{equation}
with equality if and only if $\rho= 1_B$ a.e., for some ball $B$ of $\R^N$.
\item $(k=2)$
\begin{equation}\label{bh10}
\left(\int_{\R^N} \rho dx\right)^\frac 2N\tilde \mu_2(\rho)\le \mu_2^*,
\end{equation}
with equality if and only if $\rho= 1_{B^\sharp}+1_{B^*}$ a.e., where $B^\sharp, B^*$ are two disjoint (open) balls of equal measure. 
\end{itemize}
\end{theorem}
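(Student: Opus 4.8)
The plan is to treat the two cases in parallel with a test-function argument of Weinberger--Szegő type, adapted to densities. For $k=1$ the natural test functions are $\rho\mapsto g(|x|)\,x_i/|x|$ for $i=1,\dots,N$, where $g$ is the first nontrivial Neumann eigenfunction of the ball of volume $\int\rho$, extended by a constant past the radius; after a translation chosen (via a topological/Brouwer argument) so that each numerator-minus-$\mu_1^*\times$denominator pairing is zero, summing over $i$ and using that $g'^2+(N-1)g^2/|x|^2$ is non-increasing in $|x|$ while $g^2/|x|^2$ plays the role weighted against $\rho\le 1$ gives \eqref{bh09}; the rigidity case is the standard one. I would state this as a lemma and then concentrate on $k=2$.

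\textbf{The case $k=2$.} For the second eigenvalue the idea is to reduce to two ``one-ball'' problems. Suppose $\rho$ is given with $\int_{\R^N}\rho\,dx=1$ (by scaling). I would first show one may assume $\rho$ has connected support in the sense that otherwise the optimal configuration is immediately a union of two balls: if $\rho=\rho_1+\rho_2$ with disjoint supports, then $\tilde\mu_2(\rho)\le\max(\tilde\mu_1(\rho_1),\tilde\mu_1(\rho_2))$ and one applies \eqref{bh09} on each piece, optimizing the mass split, the maximum over splits being attained at $\frac12,\frac12$. For the genuinely connected case, the heart of the proof is a \emph{bisection} step: find a hyperplane $H$ cutting $\R^N$ into half-spaces $H^+,H^-$ with $\int_{H^+}\rho=\int_{H^-}\rho=\frac12$, and then on each side build a trial function supported on that side and orthogonal (in the $\rho$-weighted $L^2$ sense) to constants, so that the two trial functions span a $2$-dimensional subspace in the space \eqref{bh08}. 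On $H^\pm$ I would use the Weinberger profile $g$ of the \emph{half-mass ball} $B^*$ (so the target constant is exactly $\mu_2^*=2^{2/N}|B|^{2/N}\mu_1(B)$), i.e. functions of the form $G(|x-z^\pm|)$ with $z^\pm$ a suitable center on each side, where $G$ is built from the radial part of a first Neumann eigenfunction of $B^*$ and is monotone. The Rayleigh quotient of any linear combination $\alpha u^++\beta u^-$ then splits as a sum over $H^\pm$ because the supports are essentially disjoint, and each piece is controlled by the one-ball estimate — giving $\tilde\mu_2(\rho)\le\mu_2^*$.

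\textbf{Making the centers work — the main obstacle.} The delicate point, and where I expect the real work to be, is choosing the bisecting hyperplane \emph{and} the two centers $z^+,z^-$ simultaneously so that on each half-space the relevant first-moment conditions $\int_{H^\pm}\rho\,G(|x-z^\pm|)\,\frac{x-z^\pm}{|x-z^\pm|}\,dx=0$ hold, which is what makes the radial trial functions admissible for the first eigenvalue of each half-mass piece. This is a vector fixed-point problem in the pair $(z^+,z^-)$ coupled to a one-parameter family of hyperplanes, and it has to be solved with a degree/Brouwer argument; the geometry of how $G$ (which lives on the half-space, truncated and extended) interacts with the constraint that the center may wish to leave $H^\pm$ is exactly the subtlety that distinguishes this from Weinberger's original argument. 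A clean way to organize it is a continuity/intermediate-value scheme: as the hyperplane sweeps through the family of bisectors one tracks the ``imbalance'' functional and invokes a Borsuk--Ulam or Poincaré--Miranda type statement. Once the trial functions are legitimate, the monotonicity of $G$ together with $0\le\rho\le1$ yields the inequality by comparison with the pure ball (the density $\rho\le 1$ being used exactly as in Weinberger to bound the ``zeroth moment'' term in the right direction), and equality forces $\rho=1$ on two balls and $0$ elsewhere, with the hyperplane separating them — which is the stated rigidity.
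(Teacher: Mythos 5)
Your $k=1$ outline is essentially Weinberger's argument adapted to densities, which is also what the paper does (you omit the verification that the centering point stays bounded when the support of $\rho$ is unbounded, but that is a fixable technicality). The $k=2$ part, however, has a genuine gap at its core. Your plan is to bisect the mass by a hyperplane and use two trial functions, one supported in each half-space, so that the Rayleigh quotient decouples on their two-dimensional span. But in the relaxed setting the admissible functions are restrictions to $\{\rho>0\}$ of globally smooth (or Lipschitz) functions; if $\rho$ is positive on both sides of the hyperplane arbitrarily close to it (precisely the ``connected'' case you isolate), a trial function whose $\rho$-support lies in $H^+$ must vanish on the hyperplane, otherwise $\alpha u^+ +\beta u^-$ is discontinuous across it and is not admissible. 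A Weinberger profile centered at $z^+$ (whether the scalar radial $G(|x-z^+|)$ you describe or the vector components $g(r)(x_i-z_i^+)/r$) truncated to $H^+$ does not vanish there, and forcing it to vanish by a cutoff strictly increases the Rayleigh quotient and destroys the sharp comparison with $\mu_1$ of the half-mass ball; no choice of center or of bisecting hyperplane repairs this, which is exactly why a naive ``two disjoint bumps'' argument has never yielded the $\mu_2$ inequality for Neumann conditions. The paper's device is different: it glues the Weinberger field centered at $A$ on $H_A$ with the \emph{reflected} field $T_{AB}({\bf g}_B)$ on $H_B$ across the mediator hyperplane of $AB$ (the reflection makes ${\bf g}^{AB}$ continuous there, so no vanishing is needed), and then each scalar component ${\bf g}^{AB}\cdot \ee_i$ is used separately in the min-max together with a first eigenfunction $u_1$; this requires the $2N$ orthogonality conditions \eqref{bh16} --- to constants \emph{and} to $u_1$ --- matched to the $2N$ coordinates of $(A,B)$ and established by a topological degree/homotopy argument, and in the density setting $u_1$ must be replaced by an $\vps$-minimizer, the missing $H^1$-orthogonality being shown (Lemma \ref{bh13}) to cost only $C\vps$. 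None of this structure is present in your proposal, and without it the inequality \eqref{bh10} does not follow.

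A secondary point: your reduction for disconnected $\rho=\rho_1+\rho_2$ is misstated. The mass split is given, not optimized, and the bound $\tilde\mu_2(\rho)\le\max\big(\tilde\mu_1(\rho_1),\tilde\mu_1(\rho_2)\big)$ combined with \eqref{bh09} only gives $\mu_1^*\,(\min_i m_i)^{-2/N}$, which exceeds $\mu_2^*$ as soon as the masses are unequal. The correct elementary argument spans a two-dimensional space with a locally constant mean-zero function and a near-first eigenfunction of the \emph{heavier} piece, giving $\tilde\mu_2(\rho)\le\tilde\mu_1(\rho_{\rm heavy})\le \mu_1^*\,2^{2/N}=\mu_2^*$. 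This is fixable, but it does not address the main difficulty above, and note that the paper needs no such dichotomy (indeed ``connected support'' is not a robust notion for an $L^1$ density).
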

For $k=1$, Theorem \ref{bh03} above is  a generalization of the Szeg\"{o}-Weinberger inequality, and for $k=2$ is a generalization of Theorem \ref{bh01}. 

We notice the following.
\begin{itemize}

\item If $\Om$ is a bounded, open Lipschitz set, then taking $\rho= 1_\Om$, one gets $\tilde \mu_k(\rho)= \mu_k(\Om)$. 

\item Let us remove a smooth manifold $\Gamma$ from $\Omega$, such that $H^1(\Om \sm \G)$ is compactly embedded in $L^2(\Om\sm \G)$.
This is for example the case when $\Omega$ is Lipschitz and the crack $\Gamma$ is itself Lipschitz. Then for $\rho = 1_\Om$ one has 
$\tilde \mu_k(\rho)\ge  \mu_k(\Om\sm \G)$ since $C^\infty_c(\R^N)|_{\Om\sm \G} \sq H^1(\Om\sm \G)$. From this perspective, Theorem \ref{bh03} covers the inequality proved in Theorem \ref{bh01} even in this less regular case. 

\item If the set $\Omega=\{\rho >0\}$ is smooth and there exists $\alpha >0$ such that $\rho \ge \alpha 1_{\{\rho >0\}}$ (i.e. $\rho $ is not degenerating on its support and preserves ellipticity), then $\tilde \mu_k (\rho)$ are the eigenvalues associated to the well posed problem
$$-div(\rho \nabla u)=\mu_k \rho u \;\;\mbox {in}\;\Om, \;\;\frac{ \partial u}{\partial n} =0\;\;\mbox {on}\;\partial \Om.$$

\item If $\Om$ is just a bounded open set, without any smoothness, the spectrum of the Neumann Laplacian on $\Om$ may be continuous. Theorem \ref{bh03} still applies to $\rho=1_\Om$, but we do not have any spectral interpretation of $\tilde \mu_k(1_\Om)$. The same occurs if either $\rho$ is degenerating loosing ellipticity on its support, and/or if its support is not smooth enough. 
\end{itemize}
Concerning the ideas of the proof, it is worth to recall what happens for the Dirichlet Laplacian. The Faber-Krahn inequality for the first Dirichlet eigenvalue of the Laplacian $\lb_1(\Om)$ asserts that the minimum of $|\Om|^\frac 2N\lb_1(\Om)$ is attained on balls. A simple argument analysing the positive and negative parts of a second eigenfunction, leads to the conclusion that the minimum of $|\Om|^\frac 2N\lb_2(\Om)$ is achieved on a set consisting on two equal and disjoint balls. We refer to \cite{BF13} for a detailed description of the history of the result, which is attributed to Krahn \cite{Kr25}, Hong \cite{Ho54}    and Szeg\"{o} \cite{Po55}. 

A similar argument for the Neumann Laplacian is not valid. 
The proof of Theorem \ref{bh01} (and of Theorem \ref{bh03}) is based on a suitable construction of a set of $N$ test functions which are simultaneously orthogonal to the constant function {\it and} to the first Neumann eigenfunction on a regular set $\Om$. The structure of these $N$-functions is inspired by the functions built by Weinberger, that we briefly describe  below (see, for instance, \cite{He06}, \cite{We56} for more details).

We denote throughout the paper $R_\Om$ the radius of a ball of the same volume as $\Om$, by $r_\Om$ the radius of a ball of volume $\frac{|\Om|}{2}$, by $B_R$ a ball centered at the origin of radius $R$, and by $B_{A,R}$ the ball centered at point $A$ of radius $R$. We denote by $g$ a non negative, strictly increasing 
solution\footnote{The function $g$ is explicitly given by $g(r)=r^{1-N/2} J_{N/2}(kr/R)$ where $k=\sqrt{\mu_1(B_R)}$ is the first positive zero of $r\mapsto [r^{1-N/2} J_{N/2}(r)]'$ sometimes denoted $p_{N/2,1}$ as in \cite{AB} and 
$J_{N/2}$ is the standard Bessel function. } of the following differential equation on $(0,R)$ (see the paper of Weinberger \cite{We56}, or \cite[Section 7.1.2]{He06} for details)
\begin{equation}\label{bh11.9}
g''(r)+\frac {N-1}{r} g'(r) +  (\mu_1(B_R)-\frac{N-1}{r^2})g(r)=0, g(0)=0, g'(R)=0.
\end{equation}

 Given a point  $A \in \R^N$ and a value $R>0$, Weinberger introduced the function
\begin{equation}\label{bh27}
{\bf g}_A: \R^N \ra \R^N\;\;   {\bf g}_A (x) = \frac{G_R(d_A(x))}{d_A(x)}\vect {Ax},
\end{equation} 
where $G_R:[0,+\infty) \ra \R$,
\begin{equation}\label{bh11}
G_R(r):=g(r)1_{[0, R]} +  g(R) 1_{[R, +\infty)}. 
\end{equation}
 By   $d_A(x)$  we denoted the  distance from $x$ to $A$.

 Using Brouwer's fixed point theorem, Weinberger proved for $R=R_\Om$  the existence of a point $A$ such that the set of functions
$$x \mapsto {\bf g}_A(x)\cdot \ee_i, \; i=1, \dots, N$$ 
are orthogonal to the constants in $L^2(\Om)$. Here $(\ee _i)_i$ are the vectors of an orthonormal basis. As a consequence, those functions can be taken as tests in the Rayleigh quotient for $\mu_1(\Om)$. By summation this lead to
\begin{equation}\label{bh25}
\mu_1(\Om) \le \frac{\int_{ \Om} G_{R_\Om}'^2(d_A(x))+ (N-1) \frac{G_{R_\Om}^2(d_A(x))}{d_A^2(x)} dx}{\int_{\Om} G_{R_\Om}^2(d_A(x)) dx}.
\end{equation}
The function $r \mapsto G_{R_\Om} (r)$ is strictly increasing on $[0, R_\Om]$ (and then constant), while 
$$r \mapsto G_{R_\Om}'^2(r) + (N-1) \frac{G_{R_\Om}^2}{r^2}$$
is decreasing. Consequently,  the right hand side of \eqref{bh25} is not larger than
\begin{equation}\label{bh26}
\mu_1(B_{A,R_\Om})= \frac{\int_{ B_{A,R_\Om}} G_{R_\Om}'^2(d_A(x))+ (N-1) \frac{G_{R_\Om}^2(d_A(x))}{d_A^2(x)} dx}{\int_{B_{A,R_\Om}} G_{R_\Om}^2(d_A(x)) dx}.
\end{equation}
In order to observe that $\mu_1(\Om) \le \mu_1(B_{A,R_\Om})$, one has formally  to move, one to one, the points of $\Om\sm B_{A,R_\Om}$ toward the points of the $B_{A,R_\Om} \sm \Om$  pushing forward the measure $1_{\Om\sm B_{A,R_\Om}} dx$ to $1_{B_{A,R_\Om} \sm \Om} dx$. Throughout the paper, we call  this procedure {\it a mass displacement argument}.   

\medskip
In order to prove Theorem \ref{bh01}, we shall use a somehow similar strategy, searching a set of $N$ suitable test functions. The new difficulty is that the set of functions that we have to build, should be orthogonal  to both the constant function and to a first Neumann eigenfunction on $\Om$ (which is unknown). In the same time, the associated Rayleigh quotient should not exceed $\mu_2^*$.

 Given two different points $A,B \in \R^N$, we introduce the linear part of the symmetry operator with respect to the mediator hyperplane ${\mathcal H}_{AB}$ of the segment $AB$
 $$T_{AB}: \R^N \ra \R^N, \;\; T_{AB}(\vv)= \vv - 2 (\vect{ab}\cdot \vv) \vect{ab},$$
where $\vect{ab}= \frac{\vect{AB}}{\|AB\|}$.
Denoting $H_A$, $H_B$ the half spaces determined by ${\mathcal H}_{AB}$ and containing $A$ and $B$, respectively, we build the function
\begin{equation}\label{bh17aa}
 {\bf g}^{AB} : \R^N \ra \R^N, \;\;   {\bf g}^{AB}(x)= 1_{H_A} (x){\bf g}_A(x)+ 1_{H_B}(x)T_{AB} ({\bf g}_B(x)).
 \end{equation}
 The functions ${\bf g}_A$, ${\bf g}_B$ are the functions of Weinberger introduced in \eqref{bh27}, associated  to $G_{r_\Om}$. Roughly speaking, $ {\bf g}^{AB}$ is a suitable gluing along ${\mathcal H}_{AB}$ of two Weinberger functions  corresponding to different points. 
\begin{figure}[ht]
\includegraphics[width=11cm]{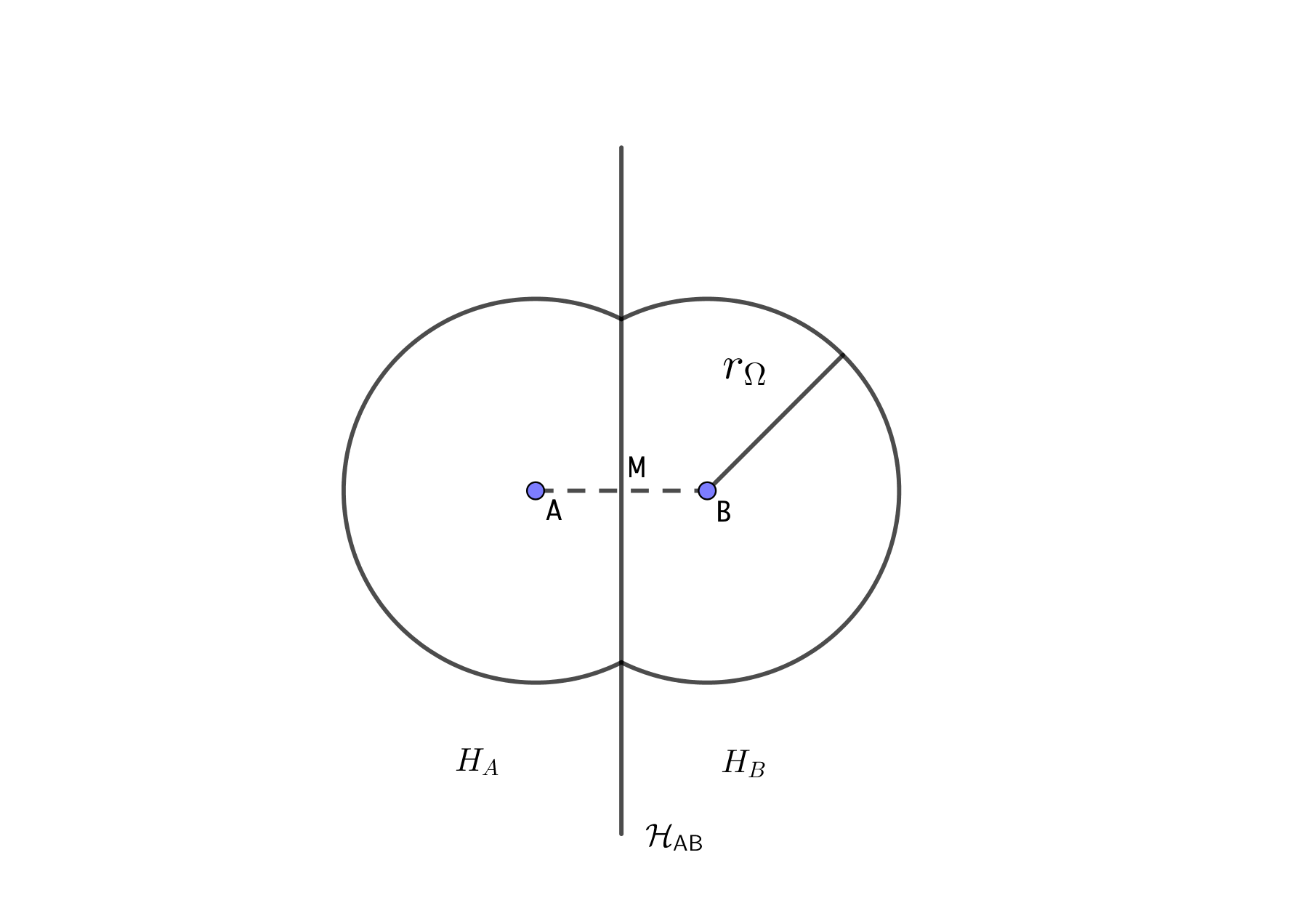} 
\caption{The geometry of the test functions ${\bf g}^{AB}$.}   
\label{bhfig1}   
\end{figure}

Our main purpose will be to justify the existence of two points $A,B$ such that the set of $N$ scalar functions
 $$x \mapsto {\bf g}^{AB}(x)\cdot {\ee_i}, i=1, \dots, N,$$ 
are simultaneously orthogonal in $L^2(\Om)$ on the constant function and on a first eigenfunction $u_1$ of the Neumann Laplacian on $\Om$, i.e.
\begin{equation}\label{bh16}
\forall i=1, \cdots, N, \;\; \int_\Om {\bf g}^{AB}\cdot {\ee_i}  dx= \int_\Om {\bf g}^{AB}\cdot {\ee_i} u_1 dx=0.
\end{equation}
  The proof of existence of $A$ and $B$ with such properties relies on {\bf a topological degree argument} and requires the most of the attention (it is worth mentioning that every result on maximization for Neumann eigenvalues in the literature relies on a strong topological argument). 

 Once the points $A$ and $B$ are found, the proof of Theorem \ref{bh01} follows in its main lines the one of Weinberger, being based on the mass displacement argument. In fact, on each of the half spaces $H_A,H_B$, the restriction of ${\bf g}^{AB}$ acts like a Weinberger function \eqref{bh27} associated to a ball of half measure.

\vspace{5mm}
\noindent {\bf Structure of the paper.} 
\begin{itemize}
\item In the next section we prove Theorem \ref{bh01} for regular sets. 
We give the detailed construction of the  function ${\bf g}^{AB}$, prove the existence of a couple of points $A$ and $B$ making ${\bf g}^{AB}\cdot {\ee_i}$  suitable as test functions for $\mu_2(\Om)$, and prove the inequality. The equality case will be a direct consequence. 
\item In section \ref{s3}, we give the proof of Theorem \ref{bh03}.  We start by proving that the classical Szeg\"{o}-Weinberger inequality holds true as well for densities and arbitrary domains. Concerning the second eigenvalue, we rely on the main ideas introduced in Section \ref{s2} and focus on the new difficulties raised by the possible absence of a first eigenfunction and by the possible unboundedness of the support.
\end{itemize}
Although it would have been more natural to prove first the general case and deduce the inequality for regular domains as a consequence, we have chosen to start by giving a detailed proof of Theorem \ref{bh01} in the classical framework, as most of  readers are presumably interested in this case. The new difficulties raised by the proof of  Theorem \ref{bh03} are exclusively related to the ill-posedness of the eigenvalue problem in the non-smooth/degenerate/unbounded setting. The fact that we deal with a density instead of a geometric domain does not raise any supplementary difficulty, being handled  by mass displacement.

\section{Proof of Theorem \ref{bh01}}\label{s2}
In this section we prove Theorem \ref{bh01}. Let $\Om \sq \R^N$ be regular. We split the proof in several parts.

\medskip
\noindent {\bf The validity of  the test functions.} Recall that $r_\Om$ is the radius of the ball of volume $\frac{|\Om|}{2}$.
 A set of eigenfunctions  associated to the first non-zero eigenvalue $\mu_1(B_{r_\Om})$ on the ball $B_{r_\Om}$ are $\{ \frac{g(r)}{r} x_i: i=1, \dots, N\}$, where $g$ solves the differential equation \eqref{bh11.9} for $R=r_\Om$.

Let $A,B$ be two distinct points in $\R^N$. We recall the function  ${\bf g}^{AB}$ introduced in \eqref{bh17aa}, 
 $${\bf g}^{AB}(x)= 1_{H_A}(x) {\bf g}_A(x)  + 1_{H_B}(x){\bf g}_B(x) - 2 \cdot 1_{H_B}(x)({\bf g}_B(x)\cdot \vect{ab}) \vect{ab}.$$
The function ${\bf g}^{AB}$ is well defined, and continuous across ${\mathcal H}_{AB}$. Indeed, it is enough to observe that for
 $x_0 \in \partial H_A \cap \partial H_B={\mathcal H}_{AB}$ we have
 $${\bf g}_A(x_0)=  T_{AB} ({\bf g}_B(x_0)),$$
 which comes from direct computation.

We notice that $\forall x \in \R^N$, 
$$\|{\bf g}^{AB}(x)\|\le g(r_\Om)$$
\begin{equation}\label{boundgrad}
\|\nabla {\bf g}^{AB}(x)\| \le 2N^2 \big (\sup_{r\in (0, +\infty)} \frac {G_{r_\Om}(r)}{r}+ \sup_{r\in (0, +\infty)} G'_{r_\Om}(r)\big )<+\infty.
\end{equation}
As a conclusion,  we get ${\bf g}^{AB}\in W^{1, \infty} (\R^N, \R^N)$, with a uniform bound on their norm, independent on $A$ and $B$. 
The functions ${\bf g}^{AB}\cdot \ee_i$ are therefore admissible as test functions on $\Om$. 

\medskip
\noindent {\bf The use of the test functions.} Assume for a moment that we have found two different points $A, B \in \R^N$ such that
the orthogonality relations \eqref{bh16} hold. 
Let us show that we can prove Theorem \ref{bh01}. 

For some $i\in \{1, \dots, N\}$, let us take in the definition of $\mu_2(\Om)$ the subspace $S=span \{ u_1, {\bf g}^{AB}\cdot {\ee_i} \}$. 
We can write
$$\forall i=1, \dots, N, \;\; \mu_2(\Om) \le \frac{\int_\Om |\nabla ({\bf g}^{AB}\cdot {\ee_i})|^2 dx}{\int_\Om |{\bf g}^{AB}\cdot {\ee_i}|^2 dx}.$$
As a consequence, 
$$\mu_2(\Om) \le \frac{\sum_{i=1}^N\int_\Om |\nabla ({\bf g}^{AB}\cdot {\ee_i})|^2 dx}{\sum_{i=1}^N \int_\Om |{\bf g}^{AB}\cdot {\ee_i}|^2 dx}.$$
Decomposing the sums over $\Om \cap H_A$ and $\Om\cap H_B$, we get using \eqref{bh17aa} (the computation is similar with the one in Weinberger's proof, see \cite{He06}))
$$\mu_2(\Om) \le \frac{\int_{H_A\cap \Om} G_{r_\Om}'^2(d_A(x))+ (N-1) \frac{G_{r_\Om}^2(d_A(x))}{d_A^2(x)} dx+\int_{H_B\cap \Om} G_{r_\Om}'^2(d_B(x)) + (N-1) \frac{G_{r_\Om}^2(d_B(x))}{d_B^2(x)}dx}{\int_{H_A\cap \Om} G_{r_\Om}^2(d_A(x)) dx+\int_{H_B\cap \Om} G_{r_\Om}^2(d_B(x)) dx }.$$
We displace the mass as follows: we split $\Om \sm (B_{A, r_\Om}\cup B_{B, r_\Om})$ in two sets $\Om_A$ and $\Om_B$, such that 
$|\Om_A|+|\Om \cap B_{A, r_\Om}|= \frac {|\Om|}{2}$. By monotonicity of $r\mapsto G_{r_\Om}(r)$ and of $r\mapsto G_{r_\Om}'^2(r)+ (N-1) \frac{G_{r_\Om}^2(r)}{r^2}$, for any couple of points  $x\in \Om_A$ and $y \in B_{A, r_\Om}\sm \Om$  the following inequalities hold
$$\frac{G_{r_\Om}'^2(d_A(x))+ (N-1) \frac{G_{r_\Om}^2(d_A(x))}{d_A^2(x)}}{d_A^2(x)}< \frac{G_{r_\Om}'^2(d_A(y))+ (N-1) \frac{G_{r_\Om}^2(d_A(y))}{d_A^2(y)}}{d_A^2(y)}$$
$$G_{r_\Om}^2(d_A(x)) > G_{r_\Om}^2(d_A(y)).$$
We then formally   displace the mass  from $\Om_A$ to $B_{A, r_\Om}\sm \Om$, and increase the Rayleigh quotient. We use the same argument for  $\Om_B$ and $B_{B, r_\Om}\sm \Om$, finally getting that
  $$\frac{\int_{H_A\cap \Om} G_{r_\Om}'^2(d_A(x))+ (N-1) \frac{G_{r_\Om}^2(d_A(x))}{d_A^2(x)} dx+\int_{H_B\cap \Om} G_{r_\Om}'^2(d_B(x)) + (N-1) \frac{G_{r_\Om}^2(d_B(x))}{d_B^2(x)}dx}{\int_{H_A\cap \Om} G_{r_\Om}^2(d_A(x)) dx+\int_{H_B\cap \Om} G_{r_\Om}^2(d_B(x)) dx }$$
  $$\le \frac{2 \int _{B_{r_\Om}} \big (G_{r_\Om}'^2(r) + (N-1) \frac{G_{r_\Om}^2}{r^2}\big )dx}{2 \int _{B_{r_\Om}} G_{r_\Om}^2(r) dr}= \mu_1(B_{r_\Om}).$$
 Since $ \mu_1(B_{r_\Om})$ is the second eigenvalue of the union of two disjoint balls of mass  $\frac {|\Om|}{2}$, the inequality in Theorem \ref{bh01} follows.
 
 If equality occurs, then $H_A\cap \Om$ and $H_B\cap \Om$ have to be balls of mass $\frac {|\Om|}{2}$ up to a set of zero Lebesgue measure. Indeed,  if there is mass displacement on a set of positive measure, the inequality has to be strict. So, if equality occurs,  $\Om$ is a.e. identical to the union of two disjoint balls of mass $\frac {|\Om|}{2}$.
 \begin{remark}
 If, for instance $\Om$ is Lipschitz and equality occurs, then $\Om$ has to coincide with the union of the two balls. If we work only with regular sets without further geometric assumption, it might be possible that one removes from one ball  a set of capacity zero and, from the other ball, a (small) set of positive capacity but of zero measure  (say a piece of a smooth manifold of dimension $N-1$). The removed  set, should be small enough such that the second non-trivial eigenvalue of the slitted ball is not smaller than the first eigenvalue of the genuine ball. In $\R^2$, this situation could occur if one removes a small segment from a diameter.
 \end{remark}

\medskip
\noindent {\bf Existence of the family of test functions.} In order to complete the proof, it remains to justify the existence of two points $A,B$ such that the orthogonality relations \eqref{bh16} hold true. We shall do this below, but we point out from the  beginning that the proof works in an identical way provided $1_\Om$ is replaced by a measurable function $\rho :\R^N\ra [0,1]$ with {\it  bounded} support, and the first eigenfunction $u_1$ is replaced by any measurable  function $u$ such that $u 1_{\{\rho=0\}}=0$ and $\int_{\R^N}\rho u^2  dx <+\infty$ and $\int_{\R^N}\rho u dx =0$. 

We give the following.
\begin{lemma}\label{bh20}
Let $A\not = B$ two points of $\R^N$. Then, for all $x \in \R^N$
$$ {\bf g}_A (x) \cdot \vect {ab} >  {\bf g}_B (x) \cdot \vect {ab}.$$
\end{lemma}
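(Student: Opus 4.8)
The statement is a purely one-dimensional monotonicity claim about the function $r \mapsto G_{r_\Om}(r)/r$ (equivalently $r\mapsto g(r)/r$ on $(0,r_\Om)$ and constant $g(r_\Om)/r$ afterwards), projected onto the direction $\vect{ab}$. First I would write $\vect{ab}=\frac{\vect{AB}}{\|AB\|}$ and compute both sides explicitly: with $d_A(x)=\|x-A\|$ and $d_B(x)=\|x-B\|$,
$$\mathbf g_A(x)\cdot\vect{ab} = \frac{G_{r_\Om}(d_A(x))}{d_A(x)}\,\vect{Ax}\cdot\vect{ab}, \qquad \mathbf g_B(x)\cdot\vect{ab} = \frac{G_{r_\Om}(d_B(x))}{d_B(x)}\,\vect{Bx}\cdot\vect{ab}.$$
Introduce the signed coordinate along the axis, $t_A := \vect{Ax}\cdot\vect{ab}$ and $t_B := \vect{Bx}\cdot\vect{ab}$; since $\vect{Ax}=\vect{AB}+\vect{Bx}$ we have $t_A - t_B = \|AB\| > 0$. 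Writing $h(r):=G_{r_\Om}(r)/r$, the inequality to prove is $h(d_A(x))\,t_A > h(d_B(x))\,t_B$ for all $x$.

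The key analytic fact is that $h$ is positive and \emph{strictly decreasing} on $(0,+\infty)$: on $(0,r_\Om)$ this is a standard property of the Weinberger/Bessel profile (it follows from the differential equation \eqref{bh11.9}, or from the known behaviour of $r^{1-N/2}J_{N/2}(kr/r_\Om)$, that $g(r)/r$ decreases — the same monotonicity quoted right after \eqref{bh26}), and on $[r_\Om,+\infty)$ one has $h(r)=g(r_\Om)/r$, obviously decreasing; continuity at $r_\Om$ gives a strictly decreasing function on all of $(0,+\infty)$. Also $h>0$ since $g>0$ on $(0,r_\Om]$. I would split the verification according to the sign of $t_A$ and $t_B$. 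If $t_A > 0 \ge t_B$ (or $t_A\ge 0>t_B$), then the left side is $\ge 0$ and the right side is $\le 0$, and at least one inequality is strict because $t_A - t_B>0$ forces them not to both vanish; so the claim holds. If $t_B \ge t_A$ is impossible since $t_A > t_B$, the remaining cases are $t_A > t_B > 0$ and $0 > t_A > t_B$. In the case $t_A > t_B > 0$: the foot of $x$ on the axis lies strictly on the $B$-side beyond... — more simply, $t_A>0$ means $x$ is in the open half-space $H_A$ in axis-terms, and geometrically $d_A(x) < d_B(x)$ exactly when $t_A+t_B>0$ — let me instead argue directly.

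The cleanest route avoids case analysis on $d_A$ versus $d_B$: note the elementary identity $d_A(x)^2 - d_B(x)^2 = \|x-A\|^2-\|x-B\|^2 = -2\vect{AB}\cdot x + \|A\|^2-\|B\|^2$, which after projecting shows $d_A(x)^2 - d_B(x)^2 = \|AB\|(t_A+t_B)$ (taking the origin at the midpoint of $AB$, say, so $\vect{Ax}\cdot\vect{ab}+\vect{Bx}\cdot\vect{ab} = 2\,(\text{midpoint-to-}x)\cdot\vect{ab}$, and $\|A\|^2=\|B\|^2$). Hence $\operatorname{sign}(d_A(x)-d_B(x)) = \operatorname{sign}(t_A+t_B)$. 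Now suppose $t_A>t_B>0$: then $t_A+t_B>0$, so $d_A(x)>d_B(x)$, so $h(d_A(x)) < h(d_B(x))$ is the \emph{wrong} direction — so I must be more careful. Actually I want $h(d_A)t_A > h(d_B)t_B$ with $t_A>t_B>0$ but $h(d_A)<h(d_B)$: this is not automatic and requires using the precise strength of the decay of $h$, i.e. that $r\mapsto r\,h(r)=G_{r_\Om}(r)$ is \emph{increasing}. Indeed $t_A\,h(d_A) = t_A\frac{G(d_A)}{d_A}$ and $\frac{t_A}{d_A} = \cos\theta_A$ is the cosine of the angle between $\vect{Ax}$ and the axis; so $t_A h(d_A) = \cos\theta_A\, G(d_A)$. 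I would therefore reparametrize: let $a := \vect{ab}$ pointing from $A$ to $B$; then $\mathbf g_A(x)\cdot a = \cos\theta_A\,G_{r_\Om}(d_A(x))$ where $\theta_A=\angle(\vect{Ax},a)$, and similarly $\mathbf g_B(x)\cdot a = \cos\theta_B\, G_{r_\Om}(d_B(x))$ with $\theta_B = \angle(\vect{Bx},a)$.

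\textbf{Main obstacle and its resolution.} The heart of the matter is comparing $\cos\theta_A\,G(d_A)$ with $\cos\theta_B\,G(d_B)$, and the right tool is to recognize the left-hand side as (the axial component of) the gradient flow of a radial potential and to exploit that $G_{r_\Om}$ is \emph{nondecreasing}. Concretely, define $\Phi:\R^N\to\R$ by $\Phi(x) = \psi(\|x\|)$ where $\psi'(r)=G_{r_\Om}(r)$, $\psi(0)=0$; then $\nabla\Phi(x) = G_{r_\Om}(\|x\|)\frac{x}{\|x\|}$, so $\mathbf g_A(x) = \nabla\Phi(x-A)$ and $\mathbf g_B(x)=\nabla\Phi(x-B)$, hence
$$\mathbf g_A(x)\cdot\vect{ab} - \mathbf g_B(x)\cdot\vect{ab} = \big(\nabla\Phi(x-A)-\nabla\Phi(x-B)\big)\cdot\vect{ab} = \frac{d}{ds}\Big|_{s=1}\!\!\Phi\big(x-A - s\vect{AB}\big) \cdot(-1)\cdots$$
— cleaner: $\big(\nabla\Phi(x-A)-\nabla\Phi(x-B)\big)\cdot\vect{ab} = \int_0^{\|AB\|} \frac{d}{ds}\big[\nabla\Phi(x-A-s\vect{ab})\cdot\vect{ab}\big]\,ds \cdot(-1)$, i.e. it equals $\int_0^{\|AB\|} \vect{ab}^{\,T} D^2\Phi(x - A - s\,\vect{ab})\,\vect{ab}\,ds$ with the correct sign; so it suffices to show $\vect{ab}^{\,T} D^2\Phi(y)\,\vect{ab} > 0$ for every $y\neq 0$, i.e. that $\Phi$ is strictly convex in every direction. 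Since $D^2\Phi(y) = \frac{G_{r_\Om}(\|y\|)}{\|y\|}\big(I - \frac{y\otimes y}{\|y\|^2}\big) + G_{r_\Om}'(\|y\|)\frac{y\otimes y}{\|y\|^2}$, and $G_{r_\Om}>0$ on $(0,+\infty)$, $G_{r_\Om}'\ge 0$ (indeed $g'>0$ on $(0,r_\Om)$, and $G_{r_\Om}'=0$ beyond), both eigenvalues of $D^2\Phi(y)$ are $\ge 0$ and at least one — namely $G_{r_\Om}(\|y\|)/\|y\|>0$ — is strictly positive; the direction $\vect{ab}$ has a nonzero component on the corresponding eigenspace unless $\vect{ab}$ is exactly radial at $y$, and even then one must check the radial eigenvalue $G_{r_\Om}'(\|y\|)$, which can vanish. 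The remedy is that the \emph{integral} over $s\in(0,\|AB\|)$ cannot be zero: the set of $s$ for which $y=x-A-s\vect{ab}$ is simultaneously radial-aligned with $\vect{ab}$ (forcing $y\parallel\vect{ab}$) and lies in the region $\|y\|\ge r_\Om$ has measure zero unless $x$ lies on the line through $A,B$, and in that exceptional collinear case a direct one-variable computation using $G_{r_\Om}$ strictly increasing on $[0,r_\Om]$ finishes it. I expect the collinear/degenerate case and the careful handling of the region $r\ge r_\Om$ (where $G_{r_\Om}$ is flat) to be the only real subtlety; everything else is the convexity computation above plus the positivity and monotonicity of $g$ recorded after \eqref{bh11.9} and \eqref{bh26}.
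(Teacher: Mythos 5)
The paper's own justification of Lemma \ref{bh20} is the single sentence ``immediate, by direct comparison'', so there is no detailed argument to match yours against; your route via the radial potential $\Phi$ with ${\bf g}_A(x)=\nabla\Phi(x-A)$ is a legitimate way to make that comparison precise. Indeed it is equivalent to the elementary one--dimensional observation that, for fixed distance $h\ge 0$ from $x$ to the line $AB$ and $t=\vect{Px}\cdot\vect{ab}$, one has ${\bf g}_P(x)\cdot\vect{ab}=H(t)$ with $H(t)=t\,G_{r_\Om}(r)/r$, $r=\sqrt{t^2+h^2}$, and $H'(t)=\frac{h^2}{r^2}\,\frac{G_{r_\Om}(r)}{r}+\frac{t^2}{r^2}\,G'_{r_\Om}(r)\ge 0$, which is exactly a nonnegative combination of the two eigenvalues $G_{r_\Om}(r)/r$ and $G'_{r_\Om}(r)$ of your $D^2\Phi$; since $\nabla\Phi$ is Lipschitz (cf.\ \eqref{boundgrad}) the fundamental theorem of calculus along the segment is justified. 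Up to this point your argument is correct and yields ${\bf g}_A(x)\cdot\vect{ab}\ge {\bf g}_B(x)\cdot\vect{ab}$ for every $x$, with strict inequality whenever $x$ is not on the line through $A$ and $B$.

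The genuine gap is your final claim that ``a direct one-variable computation finishes'' the collinear case: it cannot, because strict inequality is simply false there. Take $x=A+t\,\vect{ab}$ with $t-\|AB\|\ge r_\Om$, i.e.\ $x$ on the line, beyond $B$, at distance at least $r_\Om$ from both points. Then $d_A(x)=t$ and $d_B(x)=t-\|AB\|$ are both $\ge r_\Om$, so by \eqref{bh11} and \eqref{bh27} both fields equal $g(r_\Om)\,\vect{ab}$ and the two sides coincide; the symmetric situation occurs beyond $A$. (This also shows that the lemma as printed is slightly over-stated.) Your analysis actually identifies the correct degenerate set --- radial alignment with $\vect{ab}$ together with $G'_{r_\Om}=0$ --- but the right conclusion to draw is not pointwise strictness everywhere; it is that equality can occur only for $x$ on the line $AB$, a Lebesgue-null set in $\R^N$, $N\ge 2$. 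That a.e.-strict version is all that is needed downstream: in Lemma \ref{bh21} the inequality is integrated over $\Om$, a set of positive measure, so $\int_\Om({\bf g}_A-{\bf g}_B)\cdot\vect{ab}\,dx>0$ whenever $A\ne B$, which is what forces $A=B$. So you should replace the last step of your proposal by this observation rather than try to prove a strict pointwise inequality that fails on part of the line through $A$ and $B$.
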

\begin{proof}
The proof is immediate, by direct comparison.

\end{proof}
\begin{lemma}\label{bh21}
Assume that  $A, B$ are two points of $\R^N$ such that
$$\forall i=1, \dots, N, \;\; \int_\Om  {\bf g}_A (x) \cdot \ee_i dx = \int_\Om  {\bf g}_B (x) \cdot \ee_i dx.$$
Then  for all $\vv \in \R^N$ we have 
$$\int_\Om  {\bf g}_A (x) \cdot \vv dx = \int_\Om  {\bf g}_B (x) \cdot \vv dx,$$
and
$A=B$. 
\end{lemma}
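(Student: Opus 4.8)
The plan is to reduce everything to an algebraic statement about the vector
$$w(x) = {\bf g}_A(x) - {\bf g}_B(x),$$
whose integral over $\Om$ we wish to force to be zero. First I would observe that the first part of the lemma is a triviality: the hypothesis says $\int_\Om w(x)\cdot\ee_i\,dx = 0$ for every basis vector $\ee_i$, hence $\int_\Om w(x)\cdot\vv\,dx = 0$ for every $\vv\in\R^N$ by linearity. So the only real content is that these relations force $A=B$. The strategy for that is to find one clever choice of direction $\vv$ for which the integrand $w(x)\cdot\vv$ has a strict sign (almost everywhere on $\Om$), which then makes $\int_\Om w(x)\cdot\vv\,dx = 0$ impossible unless the configuration degenerates, i.e. unless $A=B$.

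The natural candidate is $\vv = \vect{ab}$, and this is exactly what Lemma \ref{bh20} delivers: for $A\neq B$ one has ${\bf g}_A(x)\cdot\vect{ab} > {\bf g}_B(x)\cdot\vect{ab}$ for \emph{every} $x\in\R^N$, i.e. $w(x)\cdot\vect{ab} > 0$ pointwise. Since $\Om$ is a nonempty open set it has positive Lebesgue measure, so integrating the strict pointwise inequality over $\Om$ gives
$$\int_\Om {\bf g}_A(x)\cdot\vect{ab}\,dx > \int_\Om {\bf g}_B(x)\cdot\vect{ab}\,dx,$$
which directly contradicts the first (already established) conclusion applied to $\vv=\vect{ab}$. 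Hence the assumption $A\neq B$ is untenable, so $A=B$.

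The one point deserving a sentence of care — and the only place where something could go wrong — is the pointwise strict inequality of Lemma \ref{bh20} and its dependence on $A\neq B$. Writing $\vect{Ax}$ for the vector from $A$ to $x$, one has $\vect{Ax}\cdot\vect{ab} - \vect{Bx}\cdot\vect{ab} = \vect{AB}\cdot\vect{ab} = \|AB\| > 0$; combined with the fact that $r\mapsto G_{r_\Om}(r)/r$ is positive and the monotonicity built into the Weinberger profile, a direct computation gives ${\bf g}_A(x)\cdot\vect{ab} - {\bf g}_B(x)\cdot\vect{ab} > 0$ uniformly in $x$. So the argument is genuinely three lines once Lemma \ref{bh20} is in hand; there is no real obstacle, the substance having been front-loaded into Lemma \ref{bh20}. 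For completeness the proof I would write is:

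\begin{proof}
The first assertion is immediate: writing an arbitrary $\vv\in\R^N$ as $\vv=\sum_{i=1}^N v_i\ee_i$ and using linearity of the integral,
$$\int_\Om {\bf g}_A(x)\cdot\vv\,dx = \sum_{i=1}^N v_i\int_\Om {\bf g}_A(x)\cdot\ee_i\,dx = \sum_{i=1}^N v_i\int_\Om {\bf g}_B(x)\cdot\ee_i\,dx = \int_\Om {\bf g}_B(x)\cdot\vv\,dx.$$
In particular, taking $\vv=\vect{ab}$,
$$\int_\Om {\bf g}_A(x)\cdot\vect{ab}\,dx = \int_\Om {\bf g}_B(x)\cdot\vect{ab}\,dx.$$
On the other hand, if $A\neq B$, Lemma \ref{bh20} gives ${\bf g}_A(x)\cdot\vect{ab} > {\bf g}_B(x)\cdot\vect{ab}$ for every $x\in\R^N$; since $\Om$ is a nonempty open set, $|\Om|>0$, and integrating this strict inequality over $\Om$ yields
$$\int_\Om {\bf g}_A(x)\cdot\vect{ab}\,dx > \int_\Om {\bf g}_B(x)\cdot\vect{ab}\,dx,$$
a contradiction. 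Hence $A=B$.
\end{proof}
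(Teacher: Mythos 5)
Your proof is correct and follows exactly the paper's route: the first assertion by linearity of the integral, and the conclusion $A=B$ by applying Lemma \ref{bh20} with $\vv=\vect{ab}$ and integrating the strict pointwise inequality over the positive-measure set $\Om$ to contradict the established equality. The paper states this in one line; you have simply written out the same argument in full.
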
 
\begin{proof}
The first assertion is trivial and the second is a consequence of Lemma \ref{bh20} for $\vv =  \vect {ab} $.
\end{proof}

In the sequel, we shall use a deformation argument in the framework of  the topological degree theory (see for instance \cite[Theorem 1]{Br83}), in order to prove the  following.
\begin{proposition}\label{bh22}
There exist two different points $A,B$ such that the orthogonality relations \eqref{bh16} hold true.
\end{proposition}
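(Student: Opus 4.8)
The plan is to set up a map $F: (A,B) \mapsto (\text{moments})$ from pairs of points in $\R^N$ to $\R^{2N}$, whose vanishing set encodes exactly the orthogonality relations \eqref{bh16}, and then to show $F$ has nonzero topological degree on a suitable large ball so that it must have a zero. Concretely, for a pair of distinct points $A,B$ I would consider the $2N$ quantities
$$\Phi_i(A,B) = \int_\Om {\bf g}^{AB}(x)\cdot\ee_i\, dx, \qquad \Psi_i(A,B) = \int_\Om {\bf g}^{AB}(x)\cdot\ee_i\, u_1(x)\, dx, \qquad i=1,\dots,N,$$
and set $F = (\Phi_1,\dots,\Phi_N,\Psi_1,\dots,\Psi_N)\colon \R^N\times\R^N \setminus \{A=B\} \to \R^{2N}$. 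A zero of $F$ is precisely a pair realizing \eqref{bh16}. The bound \eqref{boundgrad} and the explicit form of ${\bf g}^{AB}$ guarantee that $F$ is continuous (indeed the integrands are dominated, and ${\bf g}^{AB}$ depends continuously on $(A,B)$ away from the diagonal, with $1_{H_A},1_{H_B}$ changing only on a null set as the hyperplane moves), so topological degree is available.

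The key steps, in order, are: (1) extend $F$ continuously up to and across the diagonal $\{A=B\}$ — when $A=B$ the hyperplane ${\mathcal H}_{AB}$ is undefined, but the two Weinberger functions glue to the single function ${\bf g}_A$, and one checks that $F$ extends by $\Phi_i(A,A)=\int_\Om {\bf g}_A\cdot\ee_i\,dx$, $\Psi_i(A,A)=\int_\Om {\bf g}_A\cdot\ee_i\,u_1\,dx$, with the symmetric role of $A,B$ forcing a product structure; (2) understand the behaviour of $F$ for $|A|,|B|$ large: when both points are far from $\Om$, $G_{r_\Om}(d_A(x)) = g(r_\Om)$ is constant on $\Om$, so ${\bf g}_A$ becomes essentially a unit vector field pointing away from $A$, and the relevant moments become computable in closed form, revealing a model map whose degree is easy (this is where one designs a homotopy to a linear or otherwise explicit map); (3) use Lemma \ref{bh21}: on the diagonal, i.e. when $A=B$, the relations force nothing extra, but more importantly the content of Lemma \ref{bh20}–\ref{bh21} is that if only the \emph{constant}-orthogonality part of $F$ vanishes and additionally $A=B$ cannot happen, so that one can separate the "first moment" block from the "$u_1$" block — likely one first solves the constant-orthogonality $\Phi=0$ (which by Weinberger's own Brouwer argument has solutions, in fact a whole manifold of them parametrized by the gluing freedom) and then uses the remaining degrees of freedom (moving $A,B$ along the locus $\Phi=0$) to kill $\Psi$; (4) assemble a homotopy from $F$ to this model and invoke homotopy invariance of the degree (\cite[Theorem 1]{Br83}) to conclude $F$ has a zero in the interior, and finally rule out that the only zeros are on the diagonal $A=B$ by a dimension/parity count or by observing that a diagonal zero would mean ${\bf g}_A$ alone is orthogonal to both constants and $u_1$ in all $N$ directions, giving $\mu_1(\Om)\le\mu_2^*\cdot(\text{something})$, hence still a usable pair in the degenerate sense — or by perturbing slightly.

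The hard part will be step (3) combined with ensuring the zero found is off the diagonal: we have $2N$ equations and $2N$ parameters $(A,B)$, but the constraint structure is delicate because the constant-orthogonality block $\Phi=0$ is not a single transversal condition (Weinberger's argument produces many solutions), so the degree of the full map $F$ could a priori be zero. The right move is probably to fix the gauge: restrict to pairs $A,B$ symmetric about a moving hyperplane through the "center of mass" of $\Om$, or more precisely first use an outer Brouwer-type argument to reduce to a codimension-$N$ submanifold on which $\Phi=0$ automatically, and then run the degree argument for $\Psi$ alone on that $N$-dimensional submanifold — controlling its topology and the boundary behaviour of $\Psi$ there. Handling the non-smooth dependence of $1_{H_A}$ on $(A,B)$ near configurations where ${\mathcal H}_{AB}$ sweeps across a large portion of $\Om$ is the technical nuisance one must check does not destroy continuity of the degree; since the symmetric difference of the half-spaces has measure going to zero with the perturbation and the integrand is bounded, continuity survives, but this needs to be stated carefully. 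I expect the authors to carry this out by choosing a clever finite-dimensional reduction so that the degree computation becomes that of an explicit odd map, to which the Borsuk–Ulam-type conclusion (nonzero degree) applies.
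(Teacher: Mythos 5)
Your overall framework---encode \eqref{bh16} as the zero set of the map $F=(\Phi,\Psi)$ on pairs $(A,B)$ and conclude via homotopy invariance of the topological degree---is indeed the paper's strategy, but two points in your plan are genuine gaps. First, your step (1) is wrong as stated: $F$ does \emph{not} extend continuously across the diagonal $\{A=B\}$, and the limit is not given by the single Weinberger function ${\bf g}_A$. If $B_n\ra A$ along a direction $\vv$, the mediator hyperplane converges to the hyperplane through $A$ orthogonal to $\vv$, and the limit of ${\bf g}^{A_nB_n}$ is $1_{H_A}{\bf g}_A+1_{H_B}T(\,{\bf g}_A)$, which depends on $\vv$; so the value at the diagonal is direction-dependent and no continuous extension exists. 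The paper handles this differently: it excises a $\delta$-neighbourhood $V$ of the diagonal and works on $[-M,M]^{2N}\sm V$, showing $F$ (and all the homotopies) cannot vanish near the diagonal precisely because the directional limit ${\bf g}^{AA}\cdot\vv$ has a constant sign a.e., so $\int_\Om{\bf g}^{A_nB_n}\cdot\vv_n\,dx$ cannot tend to $0$. Your plan, by contrast, would have you computing a degree on a domain containing the diagonal, where the map is discontinuous.

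Second, the core of the proof---verifying non-vanishing on the boundary and producing a model map with computable nonzero degree---is left as speculation in your step (3)--(4), and the route you sketch (first solve $\Phi=0$ Weinberger-style to get a solution locus, then kill $\Psi$ along it; or reduce to an odd map and invoke a Borsuk--Ulam-type statement) is not carried out and is not what makes the argument work. The paper instead performs two explicit deformations: the $\Psi$-block is replaced by $\int_{B^*}{\bf g}^{AB}\cdot\ee_i\,dx$ for a ball $B^*$ whose center is chosen on the line $A+3\sqrt2\,\xi$, with $\xi$ normal to the kernel of the linear form $\vv\mapsto\int_\Om{\bf g}^{AB}\cdot\vv\,u_1\,dx$ (this choice is what prevents vanishing on the boundary during the homotopy, together with the cone argument showing that if one point is at distance $\ge M-1$ from $\Om$ the other must lie in $B_{\sqrt2}$ and is then uniquely determined by Lemma \ref{bh21}); then the $\Phi$-block is replaced by $\int_{B^\sharp}{\bf g}^{AB}\cdot\ee_i\,dx$ with $B^\sharp$ the ball antipodal to $B^*$. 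The final map $G_1$ has exactly two zeros, $(X^*,X^\sharp)$ and $(X^\sharp,X^*)$, at which the Jacobian is an explicit diagonal matrix with nonzero determinant of a fixed sign, so the degree is nonzero and $F$ must vanish off the diagonal. Without this boundary analysis and an explicit endpoint computation, your proposal does not rule out exactly the scenario you yourself flag (that the degree of $F$ could a priori be zero), so the argument is incomplete.
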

\begin{proof}
By rescaling, we may assume that $\Om \sq B_1$, the ball centered at the origin of radius equal to $1$. 
Let $M\ge 20$ be fixed (the value $20$ is chosen to be large enough with respect to the radius of  $B_1$). Denote 
$${\mathcal D}=\{ (X,Y): X,Y \in \R^N, X=Y\}\sq \R^{2N}.$$
We introduce the function
$$F: [-M,M]^{2N}\sm {\mathcal D} \ra \R^{2N},$$
by
$$F(A,B):= \big ( \int_\Om {\bf g}^{AB} \cdot \ee_idx, \int_\Om {\bf g}^{AB} \cdot \ee_i  u_1dx \big ).$$
We want to prove that there exists a couple of  points $(A,B) \in [-M,M]^{2N}\sm {\mathcal D}  $ which make  $F$ vanish. So we assume for contradiction that $F$ does not vanish on its definition domain. We first observe that there exists $\delta >0$ such that if $(A,B) \in [-M,M]^{2N}\sm {\mathcal D}$ and 
$$d_{\R^{2N}} ((A,B), {\mathcal D}) \le \delta,$$
then $F$ can not vanish at $(A,B)$. Indeed, assume for contradiction that $(A_n,B_n) \in [-M,M]^{2N}\sm {\mathcal D}$ is such that 
$$\forall i=1, \dots, N \;\int_\Om {\bf g}^{A_nB_n} \cdot \ee_idx =0,$$
and $d_{\R^{2N}} ((A_n,B_n), {\mathcal D}) \ra 0$. Extracting a subsequence, we can assume that $A_n\ra A$, $B_n \ra A$, $\frac{\vect{A_nB_n}}{\|A_nB_n\|} \ra \vv\in S^{N-1}$.
Then the a.e. limit of the sequence of functions $({\bf g}^{A_nB_n}\cdot \vv_n)_n$, denoted for convenience ${\bf g}^{AA}\cdot \vv$, has a constant sign, vanishing only on a zero measure set. This contradicts $ \int_\Om {\bf g}^{A_nB_n} \cdot \vv_ndx =0$. 

So, let us denote 
$$V= \{ (A,B) \in \R^N\times \R^N : d_{\R^{2N}} ((A,B), {\mathcal D}) \le \delta\},$$
and restrict the function $F$ to $[-M,M]^{2N}\sm V$. 

Let $B_{X^*,R}$ be a ball of some radius $0<R\le 1$ (the choice is free, but we should have in mind $r_\Om$), with a center $X^*$ carefully chosen, that will be specified in the proof. For simplicity of the notation, we denote this ball $B^*$.

\medskip
\noindent{\bf First deformation.}
We introduce for $t\in [0,1]$ the following family of functions
$$F_t : [-M,M]^{2N}\sm V \ra \R^{2N},$$
by
$$F_t (A,B)= \big ( \int_\Om {\bf g}^{AB} \cdot \ee_idx, (1-t) \int_\Om {\bf g}^{AB} \cdot \ee_i  u_1dx + t \int_{B^*}{\bf g}^{AB} \cdot \ee_i  dx \big ).$$
Clearly, this family is continuous in $t$, and $F_0\equiv F$. We shall prove that for a specific position of the center  $X^*$ of the ball $B^*$, for every $t \in[0,1]$ the function $F_t$ can not vanish on 
$\partial ([-M,M]^{2N}\sm V)$. 

Assume that for some $(A,B) \in \partial ([-M,M]^{2N}\sm V)$ and some $t\in (0,1]$ we have  $F_t(A,B)=0$. We shall focus first only on the first $N$ coordinates of $F_t(A,B)$ which depend neither on $t$ nor on $B^*$. This will give important information on the possible positions of the points $(A,B)$. 

Indeed, we start observing that $(A,B) \not \in \partial V$. This is a consequence of the choice of $\delta$, above. It remains that $(A,B) \in \partial ([-M,M]^{2N})$. In other words, at least one of the points $A$ or $B$ is at distance at least $M$ from the origin (hence at least $M-1$ from $\Om$). 

\medskip
\noindent {\bf Case 1. Assume the point $B$ is at distance at least $M-1$ from $\Om$.} Let ${\mathcal C}$ be the cone with vertex $B$, tangent to the ball $B_1$. If the point $A$ does not belong to the cone ${\mathcal C}$, then denoting $O'$ the projection of $O$ on the line $AB$, we can not have
$$\int_{\Om} {\bf g}^{AB} \cdot \vect{O'O} dx =0$$
since the function ${\bf g}^{AB} \cdot \vect{O'O}$ has constant sign on $\Om$. So $A$ has to belong to the cone. Moreover, in this situation, the point $A$ has to belong as well to the annulus $B_{B,M+1}\sm B_{B,M-1}$. Indeed, if $A$ does not belong to this annulus 
we can not have
$$\int_{\Om} {\bf g}^{AB} \cdot \vect{BO} dx =0$$
since, this time, the function ${\bf g}^{AB} \cdot \vect{BO}$ has constant sign on $\Om$ (positive if
$A$ is between the ball $B_1$ and $B$, negative if the ball $B_1$ is between $A$ and $B$. This means that $A \in {\mathcal C} \cap (B_{B,M+1}\sm B_{B,M-1})$, which by simple computation leads to $A \in  B_{\sqrt{2}}$. 

The main consequence is that the distance from $A$ to $O$ is not larger than $\sqrt{2}$ hence, by the construction of the function ${\bf g}^{AB}$, its action on the domain $\Om$ is entirely given by $A$ since $\Om$ is covered by $H_A$  only. In other words, the point $B$ does not influence the integrals in \eqref{bh16} and ${\bf g}^{AB}={\bf g}_A$ on $\Om$. Moreover, from Lemma \ref{bh21}, we get that the position of $A$ satisfying $F_t(A,B)=0$ is uniquely determined, for every $B$ far away from $\Om$. 

For $B$ far away and $A$ fixed as above, let us look now to the linear form
$$\vv \stackrel{L}{\mapsto} \int_\Om {\bf g}^{AB} \cdot \vv  u_1dx.$$
This form is not identically vanishing, otherwise for the couple $(A,B)$ the function $F$ vanishes. Consequently, the kernel of this form is an hyperplane, denoted ${\mathcal K}$. Let $\xi \in S^{N-1}$ be orthogonal to ${\mathcal K}$ such that $\int_\Om {\bf g}_A \cdot \xi u_1 dx >0$. We choose the center of the ball $X^*$ to be given by 
$A+3 \sqrt{2} \xi$. With this choice, the ball $B^*$ does not intersect $B_1$ and is fully covered by $H_A$ (recall that $M \ge 20$). Consequently,
$$\vv \mapsto \int_{B^*}  \frac{G_R(d_A(x))}{d_A(x)} \vect {Ax} \cdot \vv dx $$
has the same kernel ${\mathcal K}$ and has the same sign as $L$. In other words, for every $t \in [0,1]$
$$\vv \mapsto (1-t)\int_\Om {\bf g}^{AB} \cdot \vv  u_1dx+ t \int_{B^*}  \frac{G_R(d_A(x))}{d_A(x)} \vect {Ax} \cdot \vv dx $$
vanishes only for $\vv \in {\mathcal K}$. 

At least one of the vectors $\ee_1, \dots, \ee_N$ does not belong to ${\mathcal K}$. Consequently, among the $N$ terms
$$ i=1, \dots, N,\;\;\; (1-t)\int_\Om {\bf g}^{AB} \cdot \ee_i  u_1dx+ t \int_{B^*}  \frac{G_R(d_A(x))}{d_A(x)} \vect {Ax} \cdot \ee_i dx,$$
at least one is not vanishing. \\
The conclusion is that for every $t \in [0,1]$ the function $F_t$ can not vanish on $ \partial ([-M,M]^{2N}\sm V)$.

\medskip
\noindent {\bf Case 2. Assume the point $A$ is at distance at least $M-1$ from $\Om$.} In this case, only the point $B$ acts on $\Om$, as in the previous case, and $B\in B_{\sqrt{2}}$. The only thing which differs, is the expression of the function $F_t$, which becomes
$$F_t (A,B)= \big ( \int_\Om {\bf g}_{B} \cdot \ee_i - 2 ({\bf g}_{B} \cdot \vect{ab}) (\vect{ab}\cdot \ee_i)dx, \hskip 6cm $$
$$\hskip 2cm  (1-t) \int_\Om {\bf g}_{B} \cdot \ee_i  u_1  - 2 ({\bf g}_{B} \cdot \vect{ab}) (\vect{ab}\cdot \ee_i)u_1dx+ t \int_{B^*}{\bf g}_{B} \cdot \ee_i  - 2 ({\bf g}_{B} \cdot \vect{ab}) (\vect{ab}\cdot \ee_i)\big ).$$
In other words, we have
$$F_t (A,B)= \big ( \int_\Om {\bf g}_{B} \cdot T_{AB}(\ee_i)dx, (1-t) \int_\Om {\bf g}_{B} \cdot T_{AB}(\ee_i )u_1dx+ t \int_{B^*}{\bf g}_{B} \cdot T_{AB}(\ee_i)   \big ).$$
Again, as in Case 1, if $F_t(A,B)=0$, then $B$ has to coincide with the same point $A$, in the preceding case. 

For the point $X^*$ introduced in Case 1, we can not have for all $\ee_i$
$$(1-t)\,\int_\Om {\bf g}_{B} \cdot T_{AB}(\ee_i )u_1dx+ t \int_{B^*}{\bf g}_{B} \cdot T_{AB}(\ee_i) =0$$
since the range of $T_{AB}$ is of dimension $N$. 
\end{proof}
At that stage, we have proved that the topological degrees of $F_0$ and $F_1$ coincide: ${\rm d}(F_0,[-M,M]^{2N}\sm V,0)=
{\rm d}(F_1,[-M,M]^{2N}\sm V,0)$. We are now going to consider a second continuous deformation which will further simplify the functional.

\medskip
\noindent{\bf Second deformation.} Let $B^\sharp$ the ball obtained by symmetry of $B^*$ with respect to the origin. We define the following continuous deformation
$$G: [0,1] \times\R^{2N} \ra \R^{2N},$$
$$G_t(A,B)= \big ( (1-t) \int_\Om {\bf g}^{AB} \cdot \ee_i dx + t\int_{B^\sharp} {\bf g}^{AB} \cdot \ee_i dx, \int_{B^*}  {\bf g}^{AB} \cdot \ee_i dx \big ).$$
Similarly to Case 1, we do not vary the last $N$ coordinates of $G_t$, which are of the same nature as the first $N$ coordinates of $F_t
$. Consequently, if $G_t(A,B)=0$ for some $t$ and one of the point $(A,B)\in \partial ([-M,M]^{2N}\sm V)$ then 
the other one has to be the center of $B^*$ which is the only point which satisfies 
$\forall \vv,\, \int_{B^*}  {\bf g}^{AB} \cdot \vv dx=0$. Note that
we possibly decrease the value $\delta$ in the computation of $V$, such that $V$ is also suitable for $B^*$. Taking $\vv$ a unit vector parallel with the line $X^*O$, one can notice that  
$$(1-t) \int_\Om {\bf g}_{X^*} \cdot \vv dx + t\int_{B^\sharp} {\bf g}_{X^*} \cdot \vv dx$$
can not vanish since both integrals have the same sign. 

As $G_0=F_1$, we can glue the two continuous deformations and notice that they do not vanish on $\partial ([-M,M]^{2N}\sm V)$, so in view of \cite[Theorem 1]{Br83} they have the same topological degree:
$${\rm d}(F,[-M,M]^{2N}\sm V,0)= {\rm d}(F_1,[-M,M]^{2N}\sm V,0)= {\rm d}(G_1,[-M,M]^{2N}\sm V,0).$$

We shall compute in the sequel the topological degree of $G_1$ and we shall prove that it equals to $2$. As a consequence $F$ has at least one zero, so we conclude the proof.

\medskip
\noindent{\bf Computation of the topological degree of $G_1$ at $0$.} There are two steps.

\smallskip
\noindent{\bf Step 1.The zeros of the function $G_1$.} 
We can assume without loosing generality that the center of $B^*$ is $X^*=(3\sqrt{2}, 0, \dots, 0)$ and the center of $B^\sharp$ is $X^\sharp=(-3\sqrt{2}, 0, \dots, 0)$.
We claim that the only zeros of the function $G_1$ are the couples $(X^*,X^\sharp)$ and $(X\sharp, X^*)$. Assume $A$ and $B$ are such that 
$G_1(A,B)=0$. Then
$$\forall \vv \in \R^N \;\; \int_{B^\sharp} {\bf g}^{AB} \cdot \vv dx = \int_{B^*} {\bf g}^{AB} \cdot \vv dx=0.$$
Assume for contradiction that $X^*\not \in AB$. Denoting $X'$ the projection of $X^*$ on the line $AB$, and taking $\vv = \vect{X'X^*}$ then
$$\int _{B^*} {\bf g}^{AB} \cdot \vv dx \not=0,$$
as a consequence of the structure of the function ${\bf g}^{AB}$ and the symmetry of the ball. Indeed, the function ${\bf g}^{AB} \cdot \vv$ is odd with respect to the hyperplane containing the line $AB$ and orthogonal to $\vv$ and has constant sign on each half space defined by this hyperplane. As this hyperplane does not cut the ball into two half balls, the integral $\int _{B^*} {\bf g}^{AB} \cdot \vv dx$ can	not vanish. Consequently $X^* \in AB$, and similarly $X^\sharp \in AB$. 

Let us denote by $x_A$ and $x_B$ the abscissa of points $A$ and $B$ and $x_M=(x_A+x_B)/2$ the abscissa of the middle.
We discuss with respect to the possible values of $x_M$.
\begin{itemize}
\item If $x_M\in [-3\sqrt{2}+R, 3\sqrt{2}-R]$ each ball is completely contained in one of the two half-spaces,
consequently using the uniqueness given by Lemma \ref{bh21} we get that the two points have to coincide with the two
centres of the balls.
\item Let us prove that it is not possible that $x_M\in ]-\inf, -3\sqrt{2}+R[ \cup ]3\sqrt{2}-R,+\infty[$. Indeed, in that case
the two balls would be in the same half-space and the uniqueness result of Lemma \ref{bh21} would imply
that the same point $A$ or $B$ should be the center of each ball.
\item At last if $x_M\in [-3\sqrt{2}-R, -3\sqrt{2}+R] \cup [3\sqrt{2}-R,3\sqrt{2}+R]$, one of the two balls
is completely contained in the half-space $H_A$ or $H_B$ which fixes its center at $A$ or $B$. Taking now 
$\vv= \vect{AB}$, and since the other ball is between $A$ and $B$, we see that the function
${\bf g}^{AB} \cdot  \vect{AB}$ has a constant sign on this ball which prevents the integral to be zero
and make this case impossible.
\end{itemize}
Thus, we are always in the first case: this gives the conclusion. 

\smallskip
\noindent{\bf Computation of the sign of the Jacobian of $G_1$ at its zeros.}
The partial derivatives of the function $G_1$, as function of $A,B$, can be computed explicitly. 

We have the following general formula. Let $h: [0, +\infty) \ra \R_+^*$ be of class $C^1$ and $B_{O,R}$ the ball centred at $O$ of radius $R$. For every $i=1, \dots, N$ we denote
$$f_i^{O,R} (A)= \int_{B_{O,R}} h(d_A(Y)) (y_i-x_i) dy, $$
where $A=(x_i)_i$ and $Y=(y_i)_i$. 
Then, we compute the derivatives at the center of the ball $A=O$, 
$$\frac {\partial f_i^{O,R}}{\partial x_j} \Big  |_{A=O} = \int_{B_{O,R}} h'(d_O(Y)) \frac{(x_j-y_j)(y_i-x_i)}{d_O(Y)} + h(d_O(Y)) (-\delta_{ij})dy.$$
For $i\not =j$, we get $\frac {\partial f_i^{O,R}}{\partial x_j} (O) =0$. For $i=j$, we get
$$\frac {\partial f_i^{O,R}}{\partial x_i} \Big |_{A=O} =-  \int_{B_{O,R}} h'(d_O(Y)) \frac{(y_i-x_i)^2}{d_O(Y)} + h(d_O(Y))  dy$$
$$=-  \int_{B_{O,R}} \frac{\partial }{\partial y_i} [h(d_O(Y)) (y_i-x_i)]  dy= -\int_{\partial B_R} h(d_O(Y)) (y_i-x_i) n_i d \sigma_Y$$
$$= -R h(R) \int _{\partial B_{O,R}}  n_i^2 d \sigma_Y=-\frac{R h(R) P(B(0,R))}{N}\,=-\omega_N R^N h(R).$$
In a similar manner, for a fixed point $A=(x_i^A)$, and for variable points $B=(x_i)_i$, we consider the generic function
$$f_i ^{A,O,R} (B)= \int_{B_{O,R}} h(d_B(Y)) \frac{(x_i-x^A_i)(\vect{AB}\cdot \vect{BY}) }{\|AB\|^2}dy, $$
We assume that $A$ and $O$ are both on the first axis, so that $\vect{AO}= \beta \ee_1$, for some $\beta \in \R^*$. 
Denoting $O_\vps= (x_1^O+\vps, x^O_2, \dots, x^O_N)$, we  have
$$\frac {\partial f_1^{A,O,R}}{\partial x_1} \Big |_{B=O}= \frac{d}{d\vps}\Big |_{\vps =0} \int_{B_{O,R}} h(d_{O_\vps}(Y)) \frac{(\beta + \vps)((\beta+\vps) \ee_1\cdot \vect{O_\vps Y}) }{(\beta +\vps)^2}dy=-\frac RN \int _{\partial B_{O,R}} h(d_O(Y)) d\sigma_Y. $$
For $i \ge 2$, we have $\vect{AO_\vps} = \vec{AO}+ \vps \ee_i$, and plugging in the definition of $f_1$, we get 
$$\frac {\partial f_1^{A,O,R}}{\partial x_i} \Big |_{B=O}= \int_{B_{O,R}} h'(d_O(Y)) \frac{(-y_i)}{d_O(Y)}(y_1-x_1^O) d y + \int_{B_{O,R}} h(d_O(Y))\frac{y_i}{\beta} dy =0.$$
In order to compute $\frac {\partial f_2^{A,O,R}}{\partial x_1} \Big |_{B=O}$ we consider the perturbation $O_\vps= (x_1^O+\vps, x^O_2, \dots, x^O_N)$, and notice that
$$f_2^{A,O,R}(O_\vps)= 0. $$
In order to compute $\frac {\partial f_2^{A,O,R}}{\partial x_2} \Big |_{B=O}$, we consider the perturbation $O_\vps= (x_1^O, x^O_2+\vps, \dots, x^O_N)$ and notice that
$$ f_2^{A,O,R}(O_\vps)= \int_{B_{O,R}} h(d_{O_\vps} (Y)) \frac{\vps(\beta (y_1-x_1^O) + \vps (y_2-\vps))}{\beta^2 +\vps ^2},$$
and
$$\frac {\partial f_2^{A,O,R}}{\partial x_2} \Big |_{B=O}=\int_{B_{O,R}} h'(d_{O} (Y))\frac{-y_2}{d_{O} (Y)} \cdot 0 dy + \int_{B_{O,R}} h(d_{O} (Y))\frac{y_1-x_1^O}{\beta} dy=0.$$
In order to compute $\frac {\partial f_2^{A,O,R}}{\partial x_i} \Big |_{B=O}$, $i\ge 3$ we consider the perturbation $\vect{AO_\vps}= \beta \ee_1+ \vps \ee_i$, and notice that
$$f_2^{A,O,R}(O_\vps)= 0. $$
For the computation of the Jacobian of $G_1$ at the points $(X^\sharp, X^*)$ and $(X^*, X^\sharp)$ we recall that
$$G_1(A,B)= \big (\int_{B^\sharp} {\bf g}^{AB} \cdot \ee_i  dx , \int_{B^*}{\bf g}^{AB} \cdot \ee_i  dx \big ), i= 1, \dots, N.$$
Around the zero $(X^\sharp, X^*)$, the expression of $G_1$ is 
$$ \big (\int_{B^\sharp} {\bf g}_{A} \cdot \ee_i  dx , \int_{B^*}{\bf g}_{B} \cdot \ee_i  dx -  2 {\bf g}_{B} \cdot \vect{AB} \frac{\vect{AB}\cdot \ee_i}{\|AB\|^2}\big ), i= 1, \dots, N,$$
or, in terms of our notations
$$ G_1(A,B)= (f_i^{X^\sharp, R} (A), f_i^{A, X^*,R}(B)),  i= 1, \dots, N,$$
with $h(r)= \frac{G_R(r)}{r}$. 
Then, the Jacobian matrix at $(X^\sharp, X^*)$ is diagonal, with all  elements on the diagonal equal to 
$-\omega_N R^N h(R)$, except the one on position $(N+1,N+1)$ which equals $\omega_N R^N h(R)$. 
Its determinant equals
$$ - \Big ( \omega_N R^N h(R) \Big ) ^{2N},$$
which is a negative number. 

The same value is obtained at the point $(X^*, X^\sharp)$ as the sign of $\beta$ does not influence the value of the derivatives. 

As conclusion, the topological degree of $F$ at $0$ is equal to $2$ which leads to the existence of (at least)
two solutions of $F(A,B)=0$ in $[-M,M]^{2N}\sm V$.

\section{Proof of Theorem \ref{bh03} }\label{s3}

In this section we shall prove Theorem \ref{bh03}. We start with the following observation. In \eqref{bh08}, one can replace $C^\infty_c(\R^N)$ with $W^{1,\infty}(\R^N)$. Indeed, for a function $u \in W^{1,\infty}(\R^N)$ such that $\int_{\R^N} \rho u dx=0$, both terms $\int_{\R^N}\rho u^2 dx$ and $\int_{\R^N}\rho |\nabla u|^2 dx$ are well defined. Moreover, there exists a sequence of functions $\vphi_n \in C^\infty_c(\R^N)$ such that
$$\int_{\R^N}\rho \vphi_n dx =0, \;\; \lim_\nif \int_{\R^N} \rho \Big (|\nabla \vphi_n -\nabla u|^2 + |\vphi_n-u|^2 \Big )dx =0.$$ 
The construction is standard by cut-off and convolution, one has to be careful only to the orthogonality on $\rho$. Let $\vphi \in C^\infty_c(\R^N, [0,1])$ such that $\vphi=1$ on $B_1$ and $\vphi=0$ on $\R^N \sm B_2$. We introduce for every $
\delta >0$, $\vphi_\delta (x):= \vphi(\delta x)$ and the constant $c_\delta$ such that
$$\int_{\R^N} \rho \vphi_\delta (u-c_\delta) dx=0.$$
We observe that 
$$c_\delta \int_{\R^N} \rho \vphi_\delta dx= \int_{\R^N} \rho \vphi_\delta u dx,$$
and for $\delta \ra 0$ we get $c_\delta \ra 0$. This is a consequence of 
$$\lim_{\delta \ra 0} \int_{\R^N} \rho \Big (|\nabla (\vphi_\delta u) -\nabla u|^2 + |\vphi_\delta u-u|^2 \Big )dx =0.$$ 
Now, for fixed $\delta >0$, we consider a convolution kernel $(\xi_\vps)_\vps$ and a constant $c_{\delta, \vps}$ such that 
$$\int_{\R^N} \rho \xi_\vps \ast (\vphi_\delta (u-c_{\delta, \vps})) dx=0.$$
On the one hand 
$$c_{\delta, \vps}  \int_{\R^N} \rho \xi_\vps \ast \vphi_\delta dx = \int_{\R^N} \rho \xi_\vps \ast (\vphi_\delta u)dx,$$
hence for $\vps \ra 0$ we get $c_{\delta, \vps}  \ra c_\delta$. On the other hand
$$\lim_{\vps \ra 0} \int_{\R^N} \rho \Big (|\nabla ( \xi_\vps \ast (\vphi_\delta u)) -\nabla (\vphi_\delta u)|^2 + | \xi_\vps \ast (\vphi_\delta u)-\vphi_\delta u|^2 \Big )dx =0,$$ 
which concludes the proof by a diagonal argument.

\medskip
\noindent {\bf The case $k=1$ (extension of the Szeg\"{o}-Weinberger result).}
Since  inequality \eqref{bh09} that we want to prove is scale invariant, we can assume that $\int_{\R^N} \rho dx =1$. Let $r_1$ be the radius of the ball of volume equal to $1$.

If $\rho$ has bounded support, the proof follows step by step the geometric case. The existence of a point $A$ such that
\begin{equation}\label{bh12}
\forall i=1, \dots, N\;\; \int _{\R^N} \rho  {\bf g}_A(x) \cdot \ee_i dx =0
\end{equation}
is done using the same fixed point argument used by Weinberger (see \cite[Lemma 6.2.2] {He06}). 
The function $G=G_{r_1}$, which enters  in the definition of ${\bf g}_A$ above, is associated to $r_1$. 

Then, the proof follows step by step, the final argument being the displacement of the mass of $\rho$ towards $1_{B_{r_1}}$. 

If $\rho$ has unbounded support, the existence of a point $A$ satisfying \eqref{bh12} can be done by approximation. Note that for every $i$ the function ${\bf g}_A(x) \cdot \ee_i $ belongs to $W^{1, \infty} (\R^N)$. Let $R_n \ra +\infty$ and consider $A_n$ a point satisfying the orthogonality relations  \eqref{bh12} for the density $\rho 1_{B_{R_n}}$ (which has bounded support) and for the ${\bf g}$-functions defined with $r_1$. 
If, for a sub-sequence, $(A_n)_n$ remains bounded, then by compactness we find a limit $A$ such that $A_n \ra A$.  
It can be easily observed that the orthogonality relations \eqref{bh12} pass to the limit, since $\| {\bf g}_{A_n} \cdot \ee_i\|_\infty \le G(r_1)$. Hence $A$ satisfies \eqref{bh12} for $\rho$.

Assume for contradiction that $d_O(A_n)\ra +\infty$. We fix a radius $\ov R$ such that
$$\int _{B_{\ov R}} \rho dx = \frac 23.$$
For $n$ large enough such that $r_n \ge \ov R$, we denote $\vv_n=\frac{1}{\|\vect{A_nO}\|} \vect {A_nO}$. By the choice of $A_n$, we have
$$\int _{B_{R_n}} {\bf g}_{A_n} \cdot \vv_n dx =0,$$
which gets in contradiction with
$$\lim_{n \ra +\infty} \int _{B_{\ov R}}{\bf g}_{A_n} \cdot \vv_n dx =\frac 23 G(r_1), \;\;\mbox { and }\;\; \int _{B_{R_n} \sm B_{\ov R}}|{\bf g}_{A_n} \cdot \vv_n| dx\le \frac 13G(r_1).$$
Hence, $(A_n)_n$ remains bounded and we can build the functions of \eqref{bh12}. The proof ends using a mass displacement  argument,  pushing forward the measure  $\rho dx$ on $1_{B_{r_1}} dx$. 

\medskip
\noindent {\bf The case $k=2$.}
Assume that $\int_{\R^N} \rho dx =1$ and that $\tilde \mu_2(\rho) > \mu_2^*$. Let us denote $r_{\frac 12}$ the radius of the ball of volume $\frac 12$.

There are two difficulties. Along with the fact that the support of $\rho$ may be unbounded, there is a new difficulty: there is no necessarily existence of an {\it  eigenfunction} associated to $\tilde \mu_1(\rho)$, by eigenfunction understanding a function for which the infimum is attained in the definition of $\tilde \mu_1(\rho)$. The orthogonality on the first eigenfunction, both in $L^2$ and $H^1$, was an important point of the proof in the geometric case. Indeed, in the Rayleigh quotient estimating the second eigenfunction, the scalar product $\int_{\R^N} \rho \nabla u_1 \nabla g_i dx$ was not present, being equal to $0$.

Let us fix $\vps >0$ and consider $u_1\in W^{1, \infty}(\R^N)$ such that $\int_{\R^N}\rho u_1 dx = 0$, $\int_{\R^N}\rho u_1^2 dx = 1$ and
\begin{equation}\label{bh14}
\tilde \mu_1(\rho) \le \int_{\R^N}\rho |\nabla u_1|^2 dx < \tilde \mu_1(\rho)+  \vps.
\end{equation}
Let us prove the existence of two points $A \not=B$ (one of them being possibly at infinite distance from the origin) such that 
\begin{equation}\label{bh07.1}
\forall i=1, \cdots, N, \;\; \int_{\R^N} \rho {\bf g}^{AB}\cdot {\ee_i} dx= \int_{\R^N} \rho {\bf g}^{AB}\cdot {\ee_i} u_1 dx=0.
\end{equation}
Above, we abuse of the notation ${\bf g}^{AB}$ even if one of the points $A$ and $B$ is formally at infinite distance from the origin. The exact meaning is given below. 

Let $R_n \ra +\infty$. We apply step by step the method of Section \ref{s2} to the functions
$$1_{B_{R_n}} \rho, \;\; 1_{B_{R_n}} \rho u_1,$$
and find a couple of points $ (A_n,B_n)$ such that
\begin{equation}\label{bh07.2}
\forall i=1, \cdots, N, \;\; \int_{B_{R_n}} \rho {\bf g}^{A_nB_n}\cdot {\ee_i} dx= \int_{B_{R_n}} \rho {\bf g}^{A_nB_n}\cdot {\ee_i} u_1 dx=0.
\end{equation}
If both sequences $(A_n)_n$, $(B_n)_n$ stay bounded, we can assume (up to extracting a sub-sequence) that $A_n \ra A$, $B_n\ra B$. If $A \not = B$, then all equalities in \eqref{bh07.2} pass to the limit to \eqref{bh07.1}. If $A=B$, then taking a further sub-sequence such that
$$\frac{\vect{A_nB_n}}{\|A_nB_n\|} \ra \vv \in S^{N-1}$$
we would get in the limit that 
$$\int_{\R^N} \rho {\bf g}^{AA}\cdot {\vv} dx=0,$$
where $ {\bf g}^{AA}$ is the  pointwise limit of the sequence $ {\bf g}^{A_nB_n}$. This is is not possible since ${\bf g}^{AA}\cdot {\vv} $ is a negative function.

If  $(A_n)_n$ stays bounded, and $d_O(B_n) \ra +\infty$, we can assume that $A_n \ra A$ and obtain that  the limit of ${\bf g}^{A_nB_n}$ equals $ {\bf g}_A:= {\bf g}^{A\infty} $. Then, the functions $({\bf g}_A \cdot \ee_i)_i$ satisfy \eqref{bh07.1}. A similar assertion holds if $B_n \ra B$, $d_O(A_n) \ra +\infty$ and $\frac{\vect{A_nB_n}}{\|A_nB_n\|} \ra \vv \in S^{N-1}$, in which case the limit is described by 
$${\bf g}_B \cdot \ee_i -2 {\bf g}_B \cdot \vv (\vv \cdot \ee_i):= {\bf g}^{\infty B}\cdot \ee_i, i=1, \dots,N $$
satisfy the orthogonality relations \eqref{bh07.1}. 

We prove now that both sequences $(A_n)_n$, $(B_n)_n$ can not go unbounded simultaneously, since  the orthogonality on constants (in relations  \eqref{bh07.2}) would be contradicted. Indeed, denote $O_n$ the projection of $O$ on the line $A_nB_n$. Fix $\ov R$ large enough such that 
$$\int_{B_{\ov R}} \rho dx = \frac 34.$$
We can assume (possibly exchanging the notations and extracting further sub-sequences) that $\|A_nO_n\|\le \|B_nO_n\|$.

If for an infinite number of indices we have 
$$\widehat {O_nOA_n} \le \frac  {\pi}{4}, $$
then taking $\vv_n= \frac{\vect{O_nO} }{\|O_nO\|}$, we get
$$\liminf_\nif \int_{B_{\ov R}} \rho  {\bf g}^{A_nB_n}\cdot {\vv_n} dx \ge \frac{1}{\sqrt{2}} \frac 34 G(r_{\frac 12})> \frac 12 G(r_{\frac 12}),$$
contradicting the hypotheses  \eqref{bh07.2}, as it is not possible that $ \int_{B_{R_n}} \rho {\bf g}^{A_nB_n}\cdot {\vv_n} dx=0$.

If for an infinite number of indices we have 
$$\widehat {O_nOA_n} \geq \frac  {\pi}{4} \Leftrightarrow \widehat {OA_nO_n} \le \frac {\pi}{4}, $$
we take $\vv_n= \frac{\vect{A_nB_n}}{\|A_nB_n\|}$ and arrive to the same conclusion.

We finally conclude with the validity of \eqref{bh07.1}. By abuse of notation, we continue to denote the set of such functions $({\bf g} ^{AB} \cdot {\ee_i})_i$, even though, one of the points is at $\infty$ (i.e. the functions ${\bf g}^{A\infty} \cdot \ee_i= {\bf g}_{A} \cdot \ee_i$ and ${\bf g}^{\infty B}\cdot \ee_i= {\bf g}_B \cdot \ee_i -2 {\bf g}_B \cdot \vv (\vv \cdot \ee_i)$).  Let us denote by 
$${\mathcal G}= \{ (g_i)_{i=1, \dots, N} : \exists A, B, \; g_i = \frac{{\bf g}^{AB} \cdot \ee_i}{(\int_{\R^N} \rho ({\bf g}^{AB} \cdot \ee_i)^2 dx)^{\frac 12}}, \forall i=1,\dots,N, \int_{\R^N}\rho {\bf g}^{AB} \cdot \ee_i dx=0\}.$$
The family ${\mathcal G}$ is not empty, and moreover there exist at least one package of $N$ functions $(g_i)_i$ such that $\int_{\R^N}\rho g_i u_1=0$, as we have proved above.
We have, in particular, 
$$\forall i=1, \dots, N, \;\; \int_{\R^N} \rho g_i^2 dx =1.$$
We observe that the set ${\mathcal G}$ is sequentially compact as if $({\bf g}^{A_nB_n} \cdot \ee_i )_i \in {\mathcal G}$, at least one of the sequences $(A_n)_n$ or $(B_n)_n$ has to stay bounded. 

We split the discussion in two cases.

\smallskip
\noindent {\bf Case 1.} Assume that there exists some function $u_1 \in W^{1, \infty} (\R^N)$ such that $\int_{\R^N}\rho u_1 dx = 0$, $\int_{\R^N}\rho u_1^2 dx = 1$ and
\begin{equation}\label{bh12.1}
\tilde \mu_1(\rho) = \int_{\R^N}\rho |\nabla u_1|^2 dx.
\end{equation} 
Since we get
$$ \int_{\R^N}\rho g_i dx = \int_{\R^N}\rho g_i u_1 dx= \int_{\R^N}\rho \nabla g_i \nabla u_1 dx =0,$$
the proof follows step by step the geometric case, by the mass displacement argument. 

\smallskip
\noindent {\bf Case 2.} Assume that there does not exists a function $u_1 \in W^{1, \infty} (\R^N)$ such that \eqref{bh12.1} holds. In this case, $u_1$ will satisfy only inequality \eqref{bh14}. We introduce the following numbers independent on the choice of $u_1$.  
$$m:= \inf \{ \frac{\int_{\R^N}\rho |\nabla g_k |^2 dx }{\int_{\R^N}\rho |g_k |^2 dx} : k=1, \dots, N, \; (g_i)_i\in {\mathcal G}\}.$$
$$M:= \sup \{ \frac{\int_{\R^N}\rho |\nabla g_k |^2 dx }{\int_{\R^N}\rho |g_k |^2 dx} : k=1, \dots, N, \; (g_i)_i\in {\mathcal G}  \}.$$
The values $m$ and $M$ are attained as a consequence of the same compactness argument described above. Therefore, we have the strict inequality
$$\tilde \mu_1(\rho) < m.$$
We give the following.
\begin{lemma}\label{bh13}
There exists $C>0$ such that $\forall \vps \in (0, \frac {m-\tilde \mu_1(\rho)}{2})$ and for every $(g_i)_i\in {\mathcal G}$ satisfying
$\int_{\R^N} \rho g_i u_1 dx =0$ we have
\begin{equation}\label{bh14.1}
\forall i=1, \dots, N, \;\;\; \tilde \mu_2(\rho)\le \int_{\R^N}\rho |\nabla g_i |^2 dx +C\vps.
\end{equation} 
\end{lemma}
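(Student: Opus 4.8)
Lemma~\ref{bh13} asserts a bound of the form $\tilde\mu_2(\rho)\le \int_{\R^N}\rho|\nabla g_i|^2\,dx + C\vps$ that is uniform over the choice of near-minimizer $u_1$ (satisfying only \eqref{bh14}) and over the package $(g_i)_i\in{\mathcal G}$ orthogonal to $u_1$. The plan is to test the definition of $\tilde\mu_2(\rho)$ with the two-dimensional subspace $S=\mathrm{span}\{u_1,g_i\}$, as in the geometric case, but now carefully track the error terms coming from the fact that $u_1$ is only an approximate eigenfunction, so the cross-term $\int_{\R^N}\rho\,\nabla u_1\cdot\nabla g_i\,dx$ no longer vanishes exactly.

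\smallskip
\noindent\textbf{The approach.} First I would fix $i$ and write down the Rayleigh quotient of a generic element $u=\alpha u_1+\beta g_i$ of $S$ with the side constraint $\int_{\R^N}\rho u\,dx=0$ (automatic, since both $u_1$ and $g_i$ are $\rho$-orthogonal to constants). Using $\int_{\R^N}\rho u_1^2=\int_{\R^N}\rho g_i^2=1$, the $\rho$-orthogonality $\int_{\R^N}\rho u_1 g_i=0$ (which is precisely the hypothesis $(g_i)_i\perp u_1$ in the lemma), and the bounds $\int_{\R^N}\rho|\nabla u_1|^2<\tilde\mu_1(\rho)+\vps$ and $\int_{\R^N}\rho|\nabla g_i|^2\le M$, the quotient becomes
$$
\frac{\alpha^2\int\rho|\nabla u_1|^2+2\alpha\beta\int\rho\nabla u_1\cdot\nabla g_i+\beta^2\int\rho|\nabla g_i|^2}{\alpha^2+\beta^2}.
$$
The max over $(\alpha,\beta)$ is the top eigenvalue of the $2\times2$ symmetric matrix with diagonal entries $a:=\int\rho|\nabla u_1|^2$, $d:=\int\rho|\nabla g_i|^2$ and off-diagonal entry $b:=\int\rho\nabla u_1\cdot\nabla g_i$. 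So $\tilde\mu_2(\rho)\le\lambda_{\max}=\frac{a+d}{2}+\sqrt{\left(\frac{a-d}{2}\right)^2+b^2}$.

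\smallskip
\noindent\textbf{Controlling the error.} The key point is to bound $|b|$. By Cauchy--Schwarz, $|b|\le\bigl(\int\rho|\nabla u_1|^2\bigr)^{1/2}\bigl(\int\rho|\nabla g_i|^2\bigr)^{1/2}\le(\tilde\mu_1(\rho)+\vps)^{1/2}M^{1/2}$, which gives a uniform bound $|b|\le B_0$ independent of $\vps\in(0,\frac{m-\tilde\mu_1(\rho)}{2})$ and of the package. This alone is not small, so a cruder estimate of $\lambda_{\max}$ would not produce the clean $+C\vps$. Instead I would argue more carefully: since $d\ge m>\tilde\mu_1(\rho)+\vps>a$, we have $d-a\ge m-\tilde\mu_1(\rho)-\vps\ge\frac{m-\tilde\mu_1(\rho)}{2}=:\sigma_0>0$, a gap bounded below uniformly. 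Then
$$
\lambda_{\max}=\frac{a+d}{2}+\frac{d-a}{2}\sqrt{1+\frac{4b^2}{(d-a)^2}}\le d+\frac{b^2}{d-a}\le d+\frac{B_0^2}{\sigma_0},
$$
using $\sqrt{1+t}\le1+\tfrac t2$. This is still a bound of the form $d+\text{const}$, not $d+C\vps$ — so the naive estimate is off. The resolution, and the step I expect to be the main obstacle, is that one must improve the bound on $|b|$ itself: one needs $|b|^2=O(\vps)$, not $O(1)$. This should follow from the \emph{near-optimality} of $u_1$ rather than mere $H^1$-boundedness: because $u_1$ nearly minimizes the Rayleigh quotient, perturbing $u_1$ in the $g_i$-direction (which is $\rho$-orthogonal to $u_1$ in both $L^2$ and, we must check, effectively does not help by more than $\vps$) forces $\int\rho\,\nabla u_1\cdot\nabla g_i\cdot(\text{something})$ to be small. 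Concretely, for small $s$, $u_1+sg_i$ is a valid competitor for $\tilde\mu_1(\rho)$ after renormalizing, so
$$
\tilde\mu_1(\rho)\le\frac{\int\rho|\nabla u_1|^2+2s\,b+s^2\int\rho|\nabla g_i|^2}{1+s^2},
$$
and expanding to first order in $s$ forces $|b|\le$ (something controlled by $\vps$ and $M$); optimizing over $s$ yields $b^2\le C'\vps$ with $C'$ depending only on $M$ and the uniform constants. Feeding $b^2\le C'\vps$ into $\lambda_{\max}\le d+\frac{b^2}{d-a}\le d+\frac{C'\vps}{\sigma_0}$ gives exactly \eqref{bh14.1} with $C:=C'/\sigma_0$, which is independent of $\vps$ and of the package $(g_i)_i$, as required. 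Finally I would remark that $m,M,\sigma_0$ are attained and strictly positive/finite by the sequential compactness of ${\mathcal G}$ established just before the lemma, so the constant $C$ is genuinely uniform.
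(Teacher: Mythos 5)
Your proof is correct, and its essential step is the same as the paper's: you bound the cross term $b=\int_{\R^N}\rho\,\nabla u_1\cdot\nabla g_i\,dx$ by testing $\tilde\mu_1(\rho)$ with $u_1+s g_i$ (admissible since both functions are $\rho$-orthogonal to constants and $L^2(\rho)$-orthonormal) and exploiting the near-minimality of $u_1$, which gives $b^2\le\vps\,(M-\tilde\mu_1(\rho))$, exactly the paper's estimate \eqref{bh15}. The only divergence is how the maximum over ${\rm span}\{u_1,g_i\}$ is then estimated: the paper parametrizes the Rayleigh quotient by $t$, finds its critical points through a quadratic whose roots satisfy $t_1t_2=-1$ (one of order $\sqrt\vps$, the other large) and evaluates at the large root, whereas you take the top eigenvalue of the $2\times 2$ form matrix in closed form and combine the uniform gap $d-a\ge\frac{m-\tilde\mu_1(\rho)}{2}$ with $\sqrt{1+x}\le 1+\frac x2$ to get $\lambda_{\max}\le d+\frac{b^2}{d-a}$. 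Your finish is a bit more direct and yields the explicit constant $C=2(M-\tilde\mu_1(\rho))/(m-\tilde\mu_1(\rho))$, while the paper's critical-point analysis reaches the same bound with a less explicit constant; both arguments rest on the same two ingredients, namely the compactness of ${\mathcal G}$ (which makes $m$ and $M$ attained with $\tilde\mu_1(\rho)<m$) and the $O(\sqrt\vps)$ smallness of the cross term.
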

\begin{proof}
Assume the set of functions $(g_i)_i$ satisfies \eqref{bh07.1}. We write, for some $i \in \{1, \dots, N\}$,
$$\forall t \in \R, \;\;\tilde \mu_1(\rho) \le \frac{\int_{\R^N}\rho |\nabla u_1+ t\nabla g_i |^2 dx}{\int_{\R^N}\rho | u_1+t g_i |^2 dx}= \frac{\int_{\R^N}\rho |\nabla u_1+ t\nabla g_i |^2 dx}{1+t^2}.
$$
Direct computations and the knowledge of $\int_{\R^N} \rho |\nabla u_1|^2 dx \le \tilde \mu_1(\rho)+ \vps$, give
$$\forall t \in \R,\;\;\; 0 \le \vps + 2t \int_{\R^N}\rho \nabla u_1\nabla g_i dx + t^2 (\int_{\R^N}\rho |\nabla g_i|^2 dx-\tilde \mu_1(\rho)).$$ 
For 
$$t = -\frac{\int_{\R^N}\rho \nabla u_1\nabla g_i dx}{\int_{\R^N}\rho |\nabla g_i|^2 dx-\tilde \mu_1(\rho)},$$
we get
$$0\le \vps - \frac{(\int_{\R^N}\rho \nabla u_1\nabla g_i dx)^2 }{\int_{\R^N}\rho |\nabla g_i|^2 dx-\tilde \mu_1(\rho)},$$
or
\begin{equation}\label{bh15}
(\int_{\R^N}\rho \nabla u_1\nabla g_i dx)^2 \le \vps (\int_{\R^N}\rho |\nabla g_i|^2 dx-\tilde \mu_1(\rho))\le \vps (M-\tilde \mu_1(\rho))
\end{equation}
where the uniform bound on the gradient of $g_i$ has been obtained at \eqref{boundgrad}.
This inequality gives a control of the scalar product $\int_{\R^N}\rho \nabla u_1\nabla g_i dx$ by $\sqrt{\vps}$.

By definition, we have
$$\tilde \mu_2(\rho)\le \sup_{t \in \R} \frac{\int_{\R^N}\rho |\nabla u_1+ t\nabla g_i |^2 dx}{1+t^2}.$$
For $t \ra \pm \infty$, the right hand side converges to the same value $\int_{\R^N}\rho |\nabla g_i |^2 dx$. If this is the supremum, the lemma is proved. Otherwise, we search the values of $t$ which are critical for the right hand side above. Performing the derivative in $t$, those critical values have to satisfy
$$-t^2 (\int_{\R^N}\rho \nabla u_1\nabla g_i dx) + t (\int_{\R^N}\rho |\nabla g_i|^2 dx- \int_{\R^N}\rho |\nabla u_1|^2 dx)+ \int_{\R^N}\rho \nabla u_1\nabla g_i dx=0.$$
If $\int_{\R^N}\rho \nabla u_1\nabla g_i dx=0$, then the only critical point is $t=0$ and in this case, this corresponds to a minimum for the Rayleigh quotient, the maximum being achieved for $t \ra \pm \infty$. If $\int_{\R^N}\rho \nabla u_1\nabla g_i dx \not=0$, then the two real roots $t_1,t_2$  satisfy

$$t_1t_2=-1, \;\; t_1+t_2= \frac{\int_{\R^N}\rho |\nabla g_i|^2 dx- \int_{\R^N}\rho |\nabla u_1|^2 dx}{ \int_{\R^N}\rho \nabla u_1\nabla g_i dx}.$$
In particular,  the second equality leads to
$$|t_1+t_2 |\ge \frac{m-(\tilde \mu_1(\rho) +\vps)}{|\int_{\R^N}\rho \nabla u_1\nabla g_i dx|}\ge \frac 12 \frac{m-\tilde \mu_1(\rho)}{|\int_{\R^N}\rho \nabla u_1\nabla g_i dx|}\ge  \frac{m-\tilde \mu_1(\rho)}{2\sqrt{\vps(M-\tilde \mu_1(\rho))}} .$$
We conclude that for some constant $C$, independent on $\vps$, we  have (possibly switching the indices)
$$|t_1| \le C\sqrt{\vps}, \;\; |t_2|\ge \frac 1C \sqrt{\vps}.$$
Evaluating the Rayleigh quotient in $t_1, t_2$ and taking into account that $\vps$ is small and $\tilde \mu_2(\rho) \ge 2^\frac 2N |B|^\frac 2N \mu_1(B)- |B|^\frac 2N \mu_1(B)$, we observe that the maximum is attained in $t_2$, which leads to 
$$\tilde \mu_2(\rho) \le  \int_{\R^N}\rho |\nabla g_i |^2 dx + C^2 |B|^\frac 2N \mu_1(B) \vps + 2C \vps \sqrt{\vps(M-\tilde \mu_1(\rho))},$$
 concluding the proof of the lemma.
\end{proof}
Going back to the proof of Theorem \ref{bh03}, we can use inequalities \eqref{bh14.1} as in the geometric case
(see the subsection {\it The use of test functions}), to obtain 

$$\tilde \mu_2(\rho)\le \mu_2^* + C \vps.$$
Making $\vps \ra 0$, the inequality is proved. 

If equality occurs, then the mass displacement should involve only a set of zero measure, otherwise the inequality is strict, independent on $\vps$.

\bigskip\ack The first author is indebted to Edouard Oudet for providing computational support leading to the formulation of Theorem 3. As well, the first author is thankful to Mark Ashbaugh, Carlo Nitsch  and Bozhidar Velichkov for their remarks and very stimulating discussions on this topic. In particular, Carlo Nitsch suggested the use of the Poincaré-Miranda theorem as a very straight and intuitive alternative to the use of the Brouwer theorem in Weinberger's proof. Its application in Theorem 1 would be a very nice alternative which, for the moment, faces the difficulty of handling the unknown function $u_1$ and the interpretation of the test function ${\bf g}^{AB}$ across the diagonal set.

\bigskip
\bibliographystyle{mybst}
\bibliography{References}

\end{document}